\newtheorem{theorem}{Theorem}[section]
\newtheorem{corollary}[theorem]{Corollary}
\newtheorem{lemma}[theorem]{Lemma}
\newtheorem{proposition}[theorem]{Proposition}
\newtheorem{conjecture}[theorem]{Conjecture}
\theoremstyle{definition}
\newtheorem{definition}[theorem]{Definition}
\newtheorem{example}[theorem]{Example}
\theoremstyle{remark}
\newtheorem{remark}[theorem]{Remark}
\numberwithin{equation}{section}
\DeclareMathOperator{\Id}{Id}
\DeclareMathOperator{\In}{In}
\newcommand{\RR}{\mathbb{R}}
\def\<#1>{\langle #1 \rangle}
\newbox\onebox
\newcommand{\coherent}[1]{\mathbin{\setbox\onebox=\hbox{$=$}\lower0.7\ht%
\onebox\hbox{$\stackrel{#1}{=}$}}}
\newcommand{\acr}{\newline\indent}
\begin{document}

\title{Ultrametric-preserving functions as monoid endomorphisms}

\author{Oleksiy Dovgoshey}
\address{\textbf{Oleksiy Dovgoshey}\acr
Department of Theory of Functions \acr
Institute of Applied Mathematics and Mechanics of NASU \acr
Slovyansk, Ukraine, \acr
Department of Mathematics and Statistics \acr
University of Turku,  Turku, Finland}

\email{oleksiy.dovgoshey@gmail.com, oleksiy.dovgoshey@utu.fi}

\begin{abstract}
 Let $\mathbb{R}^{+}=[0, \infty)$ and let  $\mathbf{End}_{\RR^+}$ be the set of all endomorphisms of the monoid $(\RR^+, \vee)$. The set $\mathbf{End}_{\RR^+}$ is a monoid with respect to the operation of the function composition $g \circ f$. It is shown that $g : \RR^+ \to \RR^+$ is pseudoultrametric-preserving  iff $g \in \mathbf{End}_{\RR^+}$. In particular, a function $f : \RR^+ \to \RR^+$ is ultrametrics-preserving iff it is an endomorphism of $(\RR^+,\vee)$ with kelnel consisting only the zero point.  We prove that a given $\mathbf{A} \subseteq \mathbf{End}_{\RR^+}$  is a submonoid of $(\mathbf{End}, \circ)$ iff there is a class $\mathbf{X}$ of pseudoultrametric spaces such that $\mathbf{A}$ coincides with the set of all functions which preserve the spaces from $\mathbf{X}$. An explicit construction of such $\mathbf{X}$ is given.
\end{abstract}

\keywords{Endomorphism of monoid, pseudoultrametric, pseudoultrametric-preserving function,  ultrametric, ultrametric-preserving function}

\subjclass[2020]{Primary 26A30, Secondary 54E35, 20M20}

\maketitle

\section{Introduction. Ultrametrics and pseudoultrametrics}

Let $\RR^{+}$  denote the set of all nonnegative real numbers.

\begin{definition}\label{def1.1}
Let $X$ be a nonempty set. A \emph{metric} on a set $X$ is a function $d : X \times X \to \RR^{+}$ such that for all $x, y, z \in X$:
\begin{itemize}
\item[(i)] $(d(x, y) = 0) \Longleftrightarrow (x = y)$, the \emph{positivity property};
\item[(ii)] $d(x, y) = d(y, x)$, the \emph{symmetry property};
\item[(iii)] $d(x, y) \leqslant d(x, z) + d(z, y)$, the \emph{triangle inequality}.
\end{itemize}
\end{definition}

A metric $d : X \times X \to \RR^{+}$ is an \emph{ultrametric} on $X$ if the \emph{strong triangle inequality}
\begin{itemize}
\item[(iv)] $d(x, y) \leqslant \max\{d(x, z), d(z, y)\}$
\end{itemize}
holds for all $x,y,z \in X$.

\begin{example}\label{ex1.2}
Following  \cite{DLPS2008TaiA} we define a mapping $ d^+ : \mathbb{R}^+ \times \mathbb{R}^+ \to \mathbb{R}^+$ as
\begin{equation}\label{ex1.2_eq1}
            d^+(p,q) := \left\{
  \begin{array}{ll}
    0,  \quad & \hbox{if} \quad p=q, \\
    \max\{p,q\}, \quad & \hbox{otherwise.}
  \end{array}
\right.
\end{equation}
Then $d^+$ is an ultrametric on $\RR^+$.
\end{example}

Different properties of  ultrametric spaces have been studied in \cite{BS2017, DLPS2008TaiA, DD2010, DM2008, DM2009, DP2013SM, GH1961S, GroPAMS1956, Ibragimov2012, KS2012, Lemin1984FAA, Lemin1984RMS39:5, Lemin1984RMS39:1, Lemin1985SMD32:3, Lemin1988, Lemin2003, PTAbAppAn2014, Qiu2009pNUAA, Qiu2014pNUAA, Vau1999TP, Ves1994UMJ, Ish2021ANUAA, Ish2023TA, Ish2023ANUAA, Ish2024TA, SP2024FPTASE, SKPMS2020, DovBBMSSS2020}.

The useful generalization of the concept of ultrametric is the concept of pseudoultrametric.

\begin{definition}\label{def2.1}
Let $X$ be a nonempty set and let $d : X \times X \to \RR^{+}$ be a symmetric function such that $d(x, x) = 0$ holds for every $x \in X$. The function $d$ is a \emph{pseudoultrametric} on $X$ if it satisfies the strong triangle inequality.
\end{definition}

If $d$ is a pseudoultrametric on $X$, then we will say that $(X, d)$ is a \emph{pseudoultrametric space}.

Every ultrametric space is a pseudoultrametric space but not conversely. In contrast to ultrametric spaces, pseudoultrametric spaces can contain some distinct points with zero distance between them.

\begin{example}\label{ex1.4}
Let $(X,d)$ be an ultrametric space  and let  $k \in \mathbb{R}^+$. Then the mapping $d_k : X \times X  \to \RR^+$,
$$
d_k (x,y) = \left\{
  \begin{array}{ll}
    d(x,y), \quad & \hbox{if} \quad d(x,y) > k \\
    0, \quad & \hbox{if} \quad   d(x,y) \leqslant k
  \end{array}
\right.
$$
is a pseudoultrametric on $X$.
\end{example}

The following concept was introduced in \cite{PTAbAppAn2014}.

\begin{definition}\label{def2.8}
A function $f : \RR^+ \to \RR^+$ is \emph{ultrametric-preserving} if $f \circ d$ is an ultrametric for every ultrametric space $(X, d)$.
\end{definition}

\begin{remark}\label{rem1.2}
Here and what follows we write $f \circ d$ for the mapping
$$
X \times X \overset{d}{\to} \RR^+ \overset{f}{\to} \RR^+ .
$$
\end{remark}

The next definition is an extension of Definition~\ref{def2.8} to pseudoultrametric spaces.

\begin{definition}\label{def2.3}
A function $f: \RR^{+} \to \RR^{+}$ is \emph{pseudoultrametric-pre\-ser\-ving} if $f \circ d$ is a pseudoultrametric for every pseudoultrametric space $(X,d)$.
\end{definition}

We denote by $\mathbf{U}$ the class of all ultrametric spaces and by $\mathbf{PU}$ the class of all pseudoultrametric spaces. We also will use the designations $\mathbf{P_U}$ and $\mathbf{P}_{\mathbf{PU}}$ for the sets of all ultrametric-preserving functions and, respectively, for the set of all  pseudometric-preserving functions. The next definition is a pseudoultrametric modification of the concept introduced by Jacek Jachymski and Filip Turobo\'s in \cite{JTRRACEFNSAMR2020}.

\begin{definition}\label{def1.8}
Let $\mathbf{X}$ be a subclass of the class $\mathbf{PU}$. Let us denote by $\mathbf{P_X}$ the set of all functions $f: \RR^+ \to \RR^+$ such that $f \in \mathbf{P_X}$ holds if and only if we have $(Y, f \circ \rho) \in \mathbf{X}$ whenever $(Y, \rho) \in \mathbf{X}$.
\end{definition}

The main goal of the present paper is to give new characterizations of the sets $\mathbf{P_U}$ and $\mathbf{P}_{\mathbf{PU}}$  and find conditions under which the equation
\begin{equation}\label{s1_eq1}
\mathbf{P_X} = \mathbf{A}
\end{equation}
has a solution  for given $\mathbf{A} \subseteq \mathbf{P}_{\mathbf{PU}}$.

The paper is organized as follows. The next section contains some definitions and facts from semigroup theory and some results related to ultrametric-preserving functions and pseudoultrametric-preserving ones.

The main results of the paper are proved in Section~3. In Theorem~\ref{mainth} we prove that the pseudoultrametric-preserving functions coincide with endomorphisms of the monoid $(\RR^+, \vee)$.
Theorem~\ref{th3.4} contains a characterization of endomorphisms of $(\RR^+, \vee)$ belonging to $\mathbf{P_U}$. Theorem~\ref{th3.5}  shows that equation~\eqref{s1_eq1} has a solution $\mathbf{X} \subseteq \mathbf{PU}$ iff $\mathbf{A}$ is a submonoid of $\mathbf{P_{PU}}$.  An explicit solution to equation\eqref{s1_eq1} is described in Propositions~\ref{prop3.6} and \ref{prop3.11}.

\section{Preliminaries on ultrametric-preserving  functions  and semigroups}

Recall that $f : \RR^{+} \to \RR^+$ is \emph{increasing} if
$$
(a \geqslant b) \Longrightarrow (f (a) \geqslant f (b))
$$
holds for all $a, b \in \RR^+$. A function $f : \RR^{+} \to \RR^+$ is said to be \emph{amenable} if $f^{-1}(0)=\{0\}$.

The present paper is mainly  motivated by the following result  by P.~Ponsgriiam and I. Termwuttipong \cite{PTAbAppAn2014}.

\begin{theorem}\label{th2.9}
A function $f : \RR^+ \to \RR^+$ is ultrametric-preserving if and only if $f$ is amenable and increasing.
\end{theorem}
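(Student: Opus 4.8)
The plan is to prove the two implications separately: for the forward direction I would probe $f$ with small finite ultrametric spaces, and for the converse I would carry out a direct verification of the axioms.

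\emph{Necessity.} Suppose $f$ is ultrametric-preserving. Applying $f\circ d$ to the one-point space forces $f(0)=0$. Next, for a fixed $t>0$, the two-point set $\{a,b\}$ with $d(a,b)=t$ is an ultrametric space, so $f\circ d$ is an ultrametric; its positivity property at the pair $(a,b)$, together with $a\neq b$, yields $f(t)=f(d(a,b))>0$. Hence $f^{-1}(0)=\{0\}$, i.e. $f$ is amenable. To obtain monotonicity, I would fix $a\geqslant b$ in $\RR^{+}$; the case $b=0$ is trivial since $f\geqslant 0$, and for $b>0$ I would consider the three-point set $\{x,y,z\}$ with $d(x,y)=d(x,z)=a$ and $d(y,z)=b$. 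A short check of the strong triangle inequality over the (up to symmetry) three orderings of $x,y,z$ shows that $(\{x,y,z\},d)\in\mathbf{U}$, so $f\circ d$ is an ultrametric, and the strong triangle inequality applied to the triple $y,x,z$ gives
\[
f(b)=f(d(y,z))\leqslant\max\{f(d(y,x)),f(d(x,z))\}=\max\{f(a),f(a)\}=f(a),
\]
so that $f$ is increasing.

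\emph{Sufficiency.} Suppose $f$ is amenable and increasing. Let $(X,d)$ be an arbitrary ultrametric space and set $\rho:=f\circ d$. Symmetry of $\rho$ is inherited from that of $d$. Amenability gives $\rho(x,x)=f(0)=0$ and, for $x\neq y$, $\rho(x,y)=f(d(x,y))>0$ because then $d(x,y)>0$; thus $\rho$ has the positivity property~(i). For the strong triangle inequality~(iv), given $x,y,z\in X$ I would assume by symmetry that $d(x,z)\geqslant d(z,y)$; since $d$ is an ultrametric, $d(x,y)\leqslant d(x,z)$, and applying the increasing function $f$ yields
\[
\rho(x,y)=f(d(x,y))\leqslant f(d(x,z))=\max\{\rho(x,z),\rho(z,y)\},
\]
where the last equality holds because $f(d(x,z))\geqslant f(d(z,y))$. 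As the strong triangle inequality implies the ordinary one, $\rho$ is an ultrametric, and $f$ is ultrametric-preserving.

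I do not expect a genuine obstacle here: the whole forward direction hinges on the three-point test space, and the only step needing a little care is confirming that this space is really an ultrametric space — in particular that its off-diagonal distances are positive, which is precisely why the case $b=0$ must be peeled off and handled trivially. The converse is a routine check against the positivity and symmetry axioms of Definition~\ref{def1.1} and the strong triangle inequality~(iv).
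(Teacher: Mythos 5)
Your proof is correct. The paper itself gives no proof of this theorem --- it is quoted as a known result of Pongsriiam and Termwuttipong \cite{PTAbAppAn2014} --- but your argument (the one-point, two-point and isosceles three-point test spaces for necessity, and the direct verification of positivity, symmetry and the strong triangle inequality for sufficiency) is exactly the standard proof from that reference, and every step, including the check that the three-point space with $d(x,y)=d(x,z)=a\geqslant d(y,z)=b>0$ is genuinely ultrametric, is carried out correctly.
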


The next extension of Theorem~\ref{th2.9} was given in \cite{Dov2020MS}.

\begin{proposition}\label{pr2.4}
The following conditions are equivalent for every function $f : \RR^+ \to \RR^+$.
\begin{itemize}
\item[(i)] $f$ is increasing and $f (0) = 0$ holds.
\item[(ii)] $f$ is pseudoultrametric-preserving.
\end{itemize}
\end{proposition}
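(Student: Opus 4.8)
The plan is to prove the equivalence (i) $\Leftrightarrow$ (ii) in Proposition~\ref{pr2.4} by establishing both implications directly from the defining axioms of a pseudoultrametric.

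First I would prove (i) $\Rightarrow$ (ii). Suppose $f$ is increasing with $f(0)=0$, and let $(X,d)$ be an arbitrary pseudoultrametric space. I need to check that $f\circ d$ satisfies the three conditions of Definition~\ref{def2.1}. Symmetry is immediate: $(f\circ d)(x,y)=f(d(x,y))=f(d(y,x))=(f\circ d)(y,x)$. The condition $(f\circ d)(x,x)=0$ follows from $d(x,x)=0$ and $f(0)=0$. For the strong triangle inequality, fix $x,y,z\in X$. By the strong triangle inequality for $d$ we have $d(x,y)\le\max\{d(x,z),d(z,y)\}$; applying the increasing function $f$ to both sides gives $f(d(x,y))\le f(\max\{d(x,z),d(z,y)\})$. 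Since $f$ is increasing, $f$ commutes with $\max$ in the sense that $f(\max\{a,b\})=\max\{f(a),f(b)\}$ (indeed, $\max\{a,b\}$ equals one of $a,b$, so $f(\max\{a,b\})$ equals the corresponding $f$-value, which is the larger of $f(a),f(b)$ because $f$ is monotone). Hence $f(d(x,y))\le\max\{f(d(x,z)),f(d(z,y))\}$, which is exactly the strong triangle inequality for $f\circ d$. Thus $f\circ d$ is a pseudoultrametric, and since $(X,d)$ was arbitrary, $f$ is pseudoultrametric-preserving.

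Next I would prove (ii) $\Rightarrow$ (i) by producing suitable test spaces. To get $f(0)=0$: take any one-point (or two-point with zero distance) pseudoultrametric space, e.g. $X=\{a,b\}$ with $d\equiv 0$; then $f\circ d$ must be a pseudoultrametric, which forces $(f\circ d)(a,a)=0$, i.e. $f(0)=0$. To get that $f$ is increasing, fix $a,b\in\RR^+$ with $a\ge b$; I would build a pseudoultrametric space containing three points $x,y,z$ with $d(x,z)=a$, $d(z,y)=b$, and $d(x,y)=a$, the remaining distances forced by symmetry and the zero-diagonal condition. One must check this $d$ is genuinely a pseudoultrametric, i.e. that the strong triangle inequality holds for every ordering of the three points; since the largest of the three pairwise distances is $a$ and it is attained at least twice (on $\{x,z\}$ and $\{x,y\}$), every instance of $d(\cdot,\cdot)\le\max\{d(\cdot,\cdot),d(\cdot,\cdot)\}$ is satisfied. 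Applying $f$, the space $(X,f\circ d)$ is a pseudoultrametric space, so in particular $f(d(x,z))=f(a)$ and $f(d(x,y))=f(a)$ satisfy $f(d(x,y))\le\max\{f(d(x,z)),f(d(z,y))\}$; but I want the reverse comparison. More directly: the strong triangle inequality applied to the triple with $z$ in the middle reads $f(d(x,y))\le\max\{f(d(x,z)),f(d(z,y))\} = \max\{f(a),f(b)\}$, which gives $f(a)\le\max\{f(a),f(b)\}$ — automatically true and uninformative. So I would instead put $d(x,z)=a$, $d(x,y)=b$, $d(z,y)=a$ is wrong too; the clean choice is $d(x,y)=a$, $d(y,z)=a$, $d(x,z)=b$ with $b\le a$: this is a valid pseudoultrametric (two distances equal the max $a$), and the strong triangle inequality with $y$ in the middle for the pair $(x,z)$ gives no help, but we never needed it — what we need is simply that $f$ applied to the pseudoultrametric $d$ with values $\{0,a,b\}$ yields a pseudoultrametric, and the strong triangle inequality for $f\circ d$ on the triple $x,y,z$ with the pair whose distance is the larger of $f(a),f(b)$ in the conclusion position forces $\max\{f(a),f(b)\}\le\max\{f(a),f(b)\}$ — still trivial. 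The genuinely working test space is: $x,y,z$ with $d(x,y)=b$, $d(x,z)=a$, $d(z,y)=a$ where now I examine the strong triangle inequality for the pair $(x,z)$: $f(a)=f(d(x,z))\le\max\{f(d(x,y)),f(d(y,z))\}=\max\{f(b),f(a)\}$, again trivial — the issue is that whenever two of the three distances are equal the inequalities degenerate. I expect the main obstacle to be exactly this: choosing the test pseudoultrametric so that the strong triangle inequality for $f\circ d$ is \emph{nontrivial}, which it cannot be on three points with a repeated maximum, yet on three points a pseudoultrametric must have its maximum repeated. The resolution is that monotonicity of $f$ does not actually follow from preserving all pseudoultrametric spaces via a single three-point space; rather, one should use that $f\circ d$ being a pseudoultrametric on the two-point space $X=\{x,y\}$ with $d(x,y)=a$ gives no monotonicity, so instead the right approach is to \emph{derive monotonicity from amenability-type considerations combined with a four-point space} — or, more cleanly, to invoke the already-proved Theorem~\ref{th2.9}: if $f$ were not increasing, pick $a>b$ with $f(a)<f(b)$; then restricting attention to ultrametrics, $f$ fails to be ultrametric-preserving by Theorem~\ref{th2.9}, so there is an ultrametric space $(X,d)$ with $f\circ d$ not an ultrametric; the failure must be of the strong triangle inequality (positivity and symmetry are automatic, and $d(x,x)=0\Rightarrow f(d(x,x))=f(0)$, and we have $f(0)=0$ already), hence $f\circ d$ is not even a pseudoultrametric, contradicting (ii). This reduces the remaining work to the observation that the only ultrametric axiom that can fail for $f\circ d$ when $f(0)=0$ is the strong triangle inequality, so a counterexample to "$f$ ultrametric-preserving" is automatically a counterexample to "$f$ pseudoultrametric-preserving" once $f(0)=0$ is known. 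Assembling these pieces — symmetry and diagonal-zero are free, $f(0)=0$ from a degenerate space, and monotonicity by contraposition through Theorem~\ref{th2.9} — completes the proof.
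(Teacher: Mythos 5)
A remark on context first: the paper does not actually prove Proposition~\ref{pr2.4} --- it is quoted from \cite{Dov2020MS} --- so there is no in-paper argument to compare against and your proposal must stand on its own. Your direction (i)~$\Rightarrow$~(ii) is correct and complete: symmetry and vanishing on the diagonal are immediate, and the identity $f(\max\{a,b\})=\max\{f(a),f(b)\}$ for increasing $f$ yields the strong triangle inequality for $f\circ d$. The extraction of $f(0)=0$ from a space with $d\equiv 0$ is also fine.

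The monotonicity half of (ii)~$\Rightarrow$~(i) contains a genuine gap, and ironically you had the correct argument in hand and discarded it. Take three distinct points $x,y,z$ with $d(x,y)=d(y,z)=a$ and $d(x,z)=b$, where $b\leqslant a$; you verified this is a pseudoultrametric. The instance of the strong triangle inequality for $f\circ d$ on the pair $(x,z)$ with $y$ as the intermediate point reads $f(b)=f(d(x,z))\leqslant\max\{f(d(x,y)),f(d(y,z))\}=\max\{f(a),f(a)\}=f(a)$. This is not ``trivial'' and not ``no help'': it is literally the statement $f(b)\leqslant f(a)$ for arbitrary $b\leqslant a$, i.e.\ monotonicity. (You appear to have expected the useful instance to be the one whose left-hand side is the \emph{largest} of the three distances; it is the one whose left-hand side is the \emph{smallest}.) Your fallback argument via Theorem~\ref{th2.9} is invalid: you assert that, once $f(0)=0$ is known, any ultrametric space witnessing the failure of ``$f$ is ultrametric-preserving'' must witness a failure of the strong triangle inequality because ``positivity is automatic.'' Positivity is the biconditional $d(x,y)=0\Leftrightarrow x=y$, and only one direction of it is automatic; the other direction requires $f$ to be amenable. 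Concretely, $f(t)=\max\{t-1,0\}$ is increasing with $f(0)=0$, hence pseudoultrametric-preserving by the half of the proposition you did prove, yet it is not amenable and therefore not ultrametric-preserving by Theorem~\ref{th2.9}; a two-point space at distance $1$ witnesses that failure while $f\circ d\equiv 0$ remains a perfectly good pseudoultrametric. So your claimed reduction (``a counterexample to ultrametric-preservation is automatically a counterexample to pseudoultrametric-preservation once $f(0)=0$ is known'') proves too much and is false. Replacing that entire discussion with the three-point computation above completes the proof.
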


Let us recall some basic concepts of semigroup theory, see, for example, ``Fundamentals of Semigroup Theory'' by John M. Howie \cite{Howie2003}.

A \emph{semigroup} is a pair $(S, \ast)$ consisting of a nonempty set $S$ and an associative operation $\ast : S \times S \to S$ which is called the \emph{multiplication} on $S$. A semigroup $S = (S, \ast)$ is a \emph{monoid} if there is $1_S \in S$ such that
$$1_S \ast s = s \ast 1_S = s$$
for every $s \in S$.

Let us give now an example of a monoid, which will be basic for the next section of the paper.

\begin{example}\label{ex2.11}
Let us define the binary operation $\vee$ on the set $\RR^+$ as
\begin{equation*}\label{ex2.3_eq1}
x \vee y  = \max \{x,y\}
\end{equation*}
for all $x,y \in \RR^+$. Then $(\RR^+, \vee)$ is a monoid with the identity element
\begin{equation}\label{ex2.3_eq2}
1_{\RR^+}=0.
\end{equation}
To see that $(\RR^+,\vee)$ really is a monoid and \eqref{ex2.3_eq2} holds, it suffices to note that the equalities
$$
\max \{ \{x,y\}, z\} = \max \{x,y,z\} = \max \{x, \{y,z\}\}
$$
and
$$
\max \{x,0\} = x
$$
are satisfied for all $x,y,z \in \RR^+$.
\end{example}

\begin{definition}\label{def2.5*}
Let $S=(S, \cdot, 1_S)$ and $T= (T, \ast, 1_T)$ be monoids with identity elements $1_S$ and $1_T$ respectively. A mapping $\Phi: S \to T$ is called a \emph{homomorphism} if, for all $x,y \in S$, we have
$$
\Phi (x \cdot y) = \Phi (x) \ast \Phi (y)
$$
and the equality
$$
\Phi (1_S) = 1_T
$$
holds.
\end{definition}

A homomorphism from $S$ to $S$ is called an \emph{endomorphism} of $S$.

\begin{proposition}\label{pr2.12}
Let $S= (S, \ast, 1_S)$ be a monoid and let
$\mathbf{End}_S$ be the set of all endomorphisms $S \to S$. Then $\mathbf{End}_S$ is a monoid with respect to the operation of the function composition $f \circ g$. The function $\mathrm{Id}_S : S \to S$, $\mathrm{Id}_S (s) = s$ for each $s \in S,$ is the identity of the monoid $(\mathbf{End}_S, \circ)$,
$$
1_{\mathbf{End}_S} = \mathrm{Id}_S.
$$
\end{proposition}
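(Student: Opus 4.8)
The plan is to verify directly that $(\mathbf{End}_S, \circ)$ satisfies all the axioms of a monoid, using the fact that the composition of functions is always associative. First I would check that $\mathbf{End}_S$ is closed under composition: if $f, g \in \mathbf{End}_S$, then for all $x, y \in S$ we have $(f \circ g)(x \ast y) = f\bigl(g(x \ast y)\bigr) = f\bigl(g(x) \ast g(y)\bigr) = f\bigl(g(x)\bigr) \ast f\bigl(g(y)\bigr) = (f \circ g)(x) \ast (f \circ g)(y)$, so $f \circ g$ is again a homomorphism; moreover $(f \circ g)(1_S) = f\bigl(g(1_S)\bigr) = f(1_S) = 1_S$, so $f \circ g$ indeed preserves the identity and hence lies in $\mathbf{End}_S$. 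This shows $\circ$ is a well-defined binary operation on $\mathbf{End}_S$.

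Next I would invoke the standard fact that composition of self-maps of any set is associative, so in particular $(f \circ g) \circ h = f \circ (g \circ h)$ for all $f, g, h \in \mathbf{End}_S$; hence $(\mathbf{End}_S, \circ)$ is a semigroup. Finally I would observe that $\mathrm{Id}_S$ is itself an endomorphism of $S$ — it clearly satisfies $\mathrm{Id}_S(x \ast y) = x \ast y = \mathrm{Id}_S(x) \ast \mathrm{Id}_S(y)$ and $\mathrm{Id}_S(1_S) = 1_S$ — and that $\mathrm{Id}_S \circ f = f \circ \mathrm{Id}_S = f$ for every $f \in \mathbf{End}_S$, which is immediate from the definition of the identity map. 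This establishes that $\mathrm{Id}_S$ is a two-sided identity for $(\mathbf{End}_S, \circ)$, completing the proof that $\mathbf{End}_S$ is a monoid with $1_{\mathbf{End}_S} = \mathrm{Id}_S$.

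There is essentially no obstacle here: the proposition is a routine bookkeeping exercise, and the only point requiring even a moment's care is the closure step, i.e.\ confirming that the composite of two homomorphisms is again a homomorphism that preserves the identity. Everything else — associativity and the identity property — is inherited verbatim from the monoid of all self-maps of $S$ under composition. I would keep the write-up short, spelling out the closure computation and then citing associativity of composition and the identity property of $\mathrm{Id}_S$ without further ado.
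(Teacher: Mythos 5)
Your proposal is correct and follows essentially the same route as the paper: verify closure of $\mathbf{End}_S$ under composition by composing homomorphisms, then appeal to associativity of composition and the identity property of $\mathrm{Id}_S$. If anything, you are slightly more careful than the paper, since you explicitly check that $(f \circ g)(1_S) = 1_S$ and that $\mathrm{Id}_S$ is itself an endomorphism, steps the paper leaves implicit.
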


\begin{proof}
The composition of mappings is associative and
$$
f= \Id_S \circ f = f \circ \Id_S
$$
for every $f : S \to S$. Thus $(\mathbf{End}_S, \circ)$ is a  monoid if we have
\begin{equation}\label{pr2.5_preq1}
f \circ g  \in \mathbf{End}_S
\end{equation}
whenever
$$
f, g \in \mathbf{End}_S.
$$
Suppose that given $f : S \to S$ and $g: S \to S$  belong  to $\mathbf{End}_S$. Let $x$ and $y$ be arbitrary  elements of $S$. Then
\begin{equation}\label{pr2.5_preq2}
g (x \ast y) = g (x) \ast g (y)
\end{equation}
holds by Definition~\ref{def2.5*}. Since $g(x)$ and $g(y)$  belong to $S$ and $f \in \mathbf{End}_S$, Definition~\ref{def2.5*}  and equality~\eqref{pr2.5_preq2} imply
$$
f (g(x \ast y)) = f (g (x) \ast g(y) ) =  f (g (x)) \ast f (g(y)).
$$
Now again using Definition~\ref{def2.5*}, we see that \eqref{pr2.5_preq1} holds. The proof is completed.
\end{proof}

\begin{remark}\label{rem2.14}
Hereinafter we use the symbol $f \circ g$ to denote the mapping
$$
S \overset{g}{\to} S \overset{f}{\to} S.
$$
\end{remark}

Let us recall now the definition of submonoid.

\begin{definition}\label{def2.7}
Let $(S, \ast)$ be a semigroup and $\varnothing \neq T \subseteq S$. Then $T$ is a \emph{subsemigroup} of $S$ if $a, b \in T \Longrightarrow a \ast b \in T$. If $(S, \ast)$ is a monoid with the identity $1_S$, then $T$ is a \emph{submonoid} of $S$ if $T$ is a subsemigroup of $S$ and $1_S \in T$.
\end{definition}

\begin{example}\label{ex2.8}
It follows directly from Definitions~\ref{def2.8} and \ref{def2.3} that $\mathbf{P_U}$ and $\mathbf{P}_{\mathbf{PU}}$ are monoids with respect to the operation of function composition. Moreover using Theorem~\ref{th2.9} and Proposition~\ref{pr2.4} we see that $\mathbf{P_U} \subseteq \mathbf{P}_{\mathbf{PU}}$ and that the identical function $\Id_{\RR^+} : \RR^+ \to \RR^+$,
$$
\Id_{\RR^+} (x) = x \quad \textrm{for each} \quad x \in \RR^+,
$$
is the common identity of the monoids $(\mathbf{P_U}, \circ)$ and $(\mathbf{P_{PU}}, \circ)$,
\begin{equation}\label{ex2.8_eq1}
1_{\mathbf{P_U}} = \Id_{\RR^+} = 1_{\mathbf{P_{PU}}}.
\end{equation}
Thus $(\mathbf{P_U}, \circ)$ is a submonoid of the monoid $(\mathbf{P_{PU}}, \circ)$ by Definition~\ref{def2.7}.
\end{example}

The \emph{kernel} of a monoid homomorphism $F : S \to T $ is the set
\begin{equation}\label{eq_eq1}
Ker(F) := \{ s \in S: F(s) = 1_T\}.
\end{equation}

\begin{proposition}\label{pr2.9}
Let $S=(S, \cdot, 1_S)$ and $T=(T, \ast, 1_T)$ be monoids. Then $Ker (F)$ is a submonoid of the monoid $S$ for every homomorphism $F : S \to T$.
\end{proposition}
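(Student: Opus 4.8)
The plan is to verify the two defining conditions of a submonoid from Definition~\ref{def2.7}: that $Ker(F)$ is nonempty and closed under the multiplication $\cdot$, and that $1_S \in Ker(F)$. The whole argument is a direct unwinding of the definitions \eqref{eq_eq1}, Definition~\ref{def2.5*}, and Definition~\ref{def2.7}, with no real obstacle; the only thing one must be slightly careful about is invoking the right axiom at the right moment.

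First I would observe that $1_S \in Ker(F)$. Indeed, $F$ is a homomorphism, so by the second requirement in Definition~\ref{def2.5*} we have $F(1_S) = 1_T$, which is exactly the condition \eqref{eq_eq1} for membership in $Ker(F)$. In particular $Ker(F) \neq \varnothing$, so the nonemptiness hypothesis in Definition~\ref{def2.7} is satisfied.

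Next I would check closure under the multiplication on $S$. Let $a, b \in Ker(F)$, so that $F(a) = 1_T$ and $F(b) = 1_T$. Applying the homomorphism property $F(a \cdot b) = F(a) \ast F(b)$ from Definition~\ref{def2.5*}, we get
$$
F(a \cdot b) = F(a) \ast F(b) = 1_T \ast 1_T = 1_T,
$$
where the last equality holds because $1_T$ is the identity element of the monoid $T$. Hence $a \cdot b \in Ker(F)$ by \eqref{eq_eq1}, so $Ker(F)$ is a subsemigroup of $S$.

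Combining the two steps, $Ker(F)$ is a subsemigroup of $S$ containing $1_S$, so by Definition~\ref{def2.7} it is a submonoid of $S$, which completes the proof. As noted, there is no genuinely hard step here; if anything, the only point worth stating explicitly is the trivial identity $1_T \ast 1_T = 1_T$, which is what forces the product of two kernel elements back into the kernel.
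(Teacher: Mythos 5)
Your proof is correct and follows exactly the same route as the paper's: establish $1_S \in Ker(F)$ from $F(1_S)=1_T$, then verify closure via $F(a\cdot b)=F(a)\ast F(b)=1_T\ast 1_T=1_T$, and conclude by Definition~\ref{def2.7}. There is nothing to add or amend.
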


\begin{proof}
Let $F : S \to T$ be a homomorphism. By Definition~\ref{def2.5*} we have the equality
$$
F (1_S)  = 1_T.
$$
Thus the membership
\begin{equation}\label{pr2.9_preq2}
1_S \in Ker (F)
\end{equation}
is valid. Hence, by Definition~\ref{def2.7}, $Ker(F)$ is a submonoid of $S$ if
\begin{equation}\label{pr2.9_preq3}
x \cdot y \in Ker (F)
\end{equation}
for all $x,y \in Ker (F)$. Let us consider arbitrary $x,y \in Ker (F)$. Then we have
\begin{equation}\label{pr2.9_preq4}
F(x) = 1_T = F(y)
\end{equation}
by~\eqref{eq_eq1}.

Moreover, since $F$ is a homomorphism, \eqref{pr2.9_preq4} implies
$$
F (x \cdot y) = 1_T \ast 1_T  = 1_T,
$$
i.e., \eqref{pr2.9_preq3} follows.
\end{proof}

\begin{proposition}\label{pr2.10}
Let $S= (S, \ast, 1_S)$ be a monoid and let $(\mathbf{End}_S, \circ)$ be the monoid of all endomorphisms $S \to S$. Then
\begin{equation}\label{pr2.10_eq1}
\mathbf{In}_S :=\{ F\in \mathbf{End}_S : Ker (F) = \{1_S\} \}
\end{equation}
is a submonoid of $(\mathbf{End}_S, \circ)$, where $\{1_S\}$ is the singleton set having only the element $1_S$.
\end{proposition}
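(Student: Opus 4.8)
The plan is to verify the two defining conditions of a submonoid from Definition~\ref{def2.7}: first that $1_{\mathbf{End}_S}$ lies in $\mathbf{In}_S$, and second that $\mathbf{In}_S$ is closed under composition. For the identity, recall from Proposition~\ref{pr2.12} that $1_{\mathbf{End}_S} = \Id_S$. Since $\Id_S(s) = s$ for every $s \in S$, the equation $\Id_S(s) = 1_S$ holds precisely when $s = 1_S$, so $Ker(\Id_S) = \{1_S\}$ and hence $\Id_S \in \mathbf{In}_S$.

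For closure, I would take arbitrary $F, G \in \mathbf{In}_S$. We already know from Proposition~\ref{pr2.12} that $F \circ G \in \mathbf{End}_S$, so it remains to show $Ker(F \circ G) = \{1_S\}$. The inclusion $\{1_S\} \subseteq Ker(F \circ G)$ is automatic, since $F \circ G$ is a homomorphism and any homomorphism sends $1_S$ to $1_S$ (this is also a special case of Proposition~\ref{pr2.9}). For the reverse inclusion, suppose $s \in Ker(F \circ G)$, i.e. $F(G(s)) = 1_S$. Then $G(s) \in Ker(F) = \{1_S\}$, so $G(s) = 1_S$, which means $s \in Ker(G) = \{1_S\}$, whence $s = 1_S$. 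Thus $Ker(F \circ G) = \{1_S\}$ and $F \circ G \in \mathbf{In}_S$.

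Having both conditions, Definition~\ref{def2.7} gives that $\mathbf{In}_S$ is a submonoid of $(\mathbf{End}_S, \circ)$. There is no real obstacle here; the argument is a routine unwinding of the definition of kernel together with the fact that composition of homomorphisms corresponds to composing the "collapsing" behaviour, so that the preimage of $\{1_S\}$ under $F \circ G$ stays trivial exactly when it is trivial for each of $F$ and $G$. The only point worth stating carefully is the use of Proposition~\ref{pr2.12} to know that $F \circ G$ is again an endomorphism, so that talking about its kernel is legitimate.
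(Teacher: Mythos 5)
Your proof is correct and follows essentially the same route as the paper: verify $Ker(\Id_S)=\{1_S\}$ for the identity, then show the kernel of a composite of two kernel-trivial endomorphisms is trivial. The only cosmetic difference is that you run the closure step as an element chase ($F(G(s))=1_S \Rightarrow G(s)=1_S \Rightarrow s=1_S$), whereas the paper packages the same computation via the preimage identity $(F\circ \Phi)^{-1}=\Phi^{-1}\circ F^{-1}$ applied to $\{1_S\}$.
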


\begin{proof}
In Proposition~\ref{pr2.12} it was noted that the equality
$$
1_{\mathbf{End}_S} = \Id_S
$$
holds, where $\Id_S$ is the identical mapping on the set $S$.
In particular, we have
\begin{equation}\label{pr2.10_prs1}
\Id_S (1_S) = 1_S
\end{equation}
and
\begin{equation}\label{pr2.10_prs2}
\Id_S (x) \neq 1_S
\end{equation}
whenever $x \in S$ and $x \neq 1_S$. Now using \eqref{eq_eq1}  we see that \eqref{pr2.10_prs1} and \eqref{pr2.10_prs2} imply
$$
Ker (\Id_S) = \{1_S\}.
$$
Hence the membership
$$
\Id_S \in \mathbf{In}_S
$$
is valid by  \eqref{pr2.10_eq1}. Thus
$$
1_{\mathbf{End}_S} \in \mathbf{In}_S
$$
holds.

Let us consider now arbitrary $F, \Phi \in \mathbf{In}_S$. To complete the proof it suffices to show that
\begin{equation}\label{pr2.10_prs3}
F  \circ \Phi \in \boldsymbol{\In}_S.
\end{equation}
To prove \eqref{pr2.10_prs3} it is suitable to rewrite the equalities
$$
Ker (F) = \{1_S\} \quad \textrm{and} \quad Ker (\Phi) = \{1_S\}
$$
as
\begin{equation}\label{pr2.10_prs4}
\{1_S\} = F^{-1} [\{1_S\}]
\end{equation}
and
\begin{equation}\label{pr2.10_prs5}
\{1_S\} = \Phi^{-1} [\{1_S\}],
\end{equation}
where $F^{-1} [\{1_S\}] (\Phi^{-1} [\{1_S\}])$ is the inverse image of the singleton $\{ 1_S\}$ under mapping $F$ ( mapping $\Phi$).

Now using \eqref{pr2.10_prs4}, \eqref{pr2.10_prs5} and the well-known formula
$$
(F \circ \Phi)^{-1} = \Phi^{-1} \circ F^{-1}
$$
(see, for example \cite{Kur1966}, page 18) we obtain
$$
(F \circ \Phi)^{-1} [\{1_S\}] = \Phi^{-1} (F^{-1} [\{1_S\}]) = \Phi^{-1} [\{1_S\}] = \{1_S\}.
$$
Thus \eqref{pr2.10_prs3} holds. The proof  is completed.
\end{proof}

\section{Main results}

The first our goal is to show that the set $\mathbf{P}_{\mathbf{PU}}$ coincides with the set $\mathbf{End}_{\RR^+}$ of all endomorphisms of the monoid $(\RR^+, \vee)$.

The  following lemma is  simple, and we omit its proof here.

\begin{lemma}\label{lem2.1}
The equivalence
\begin{equation*}\label{lem2.1_eq3}
    (a \leqslant b) \Longleftrightarrow (a \vee b = b)
\end{equation*}
is valid for all $a,b \in \RR^+$.
\end{lemma}

The next theorem is the first main result of the paper.

\begin{theorem}\label{mainth}
The sets $\mathbf{End}_{\RR^+}$ and $\mathbf{P}_{\mathbf{PU}}$ are the same,
\begin{equation}\label{mth_eq1}
    \mathbf{End}_{\RR^+}= \mathbf{P}_{\mathbf{PU}}.
\end{equation}
\end{theorem}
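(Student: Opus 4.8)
The plan is to prove the two inclusions $\mathbf{End}_{\RR^+} \subseteq \mathbf{P}_{\mathbf{PU}}$ and $\mathbf{P}_{\mathbf{PU}} \subseteq \mathbf{End}_{\RR^+}$ separately, in each case reducing everything to Proposition~\ref{pr2.4}, which already characterizes $\mathbf{P}_{\mathbf{PU}}$ as the set of increasing functions $f$ with $f(0)=0$. So the real content is the equality
\begin{equation*}
\mathbf{End}_{\RR^+} = \{ f : \RR^+ \to \RR^+ : f \text{ is increasing and } f(0)=0 \},
\end{equation*}
and both directions of this will hinge on Lemma~\ref{lem2.1}, which translates the order relation $a \leqslant b$ into the algebraic identity $a \vee b = b$.

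First I would show $\mathbf{End}_{\RR^+} \subseteq \mathbf{P}_{\mathbf{PU}}$. Take $g \in \mathbf{End}_{\RR^+}$. By Definition~\ref{def2.5*} applied to the monoid $(\RR^+, \vee)$ with identity $1_{\RR^+} = 0$ (Example~\ref{ex2.11}), we immediately get $g(0) = 0$. To see that $g$ is increasing, suppose $a \geqslant b$ in $\RR^+$; then $a \vee b = a$ by Lemma~\ref{lem2.1}, so applying the homomorphism property gives $g(a) = g(a \vee b) = g(a) \vee g(b) = \max\{g(a), g(b)\}$, whence $g(a) \geqslant g(b)$. Thus $g$ is increasing with $g(0)=0$, so $g \in \mathbf{P}_{\mathbf{PU}}$ by Proposition~\ref{pr2.4}.

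For the converse $\mathbf{P}_{\mathbf{PU}} \subseteq \mathbf{End}_{\RR^+}$, let $f \in \mathbf{P}_{\mathbf{PU}}$; by Proposition~\ref{pr2.4}, $f$ is increasing and $f(0)=0$, which already gives the identity-preservation condition $f(1_{\RR^+}) = 1_{\RR^+}$. It remains to verify $f(x \vee y) = f(x) \vee f(y)$ for all $x, y \in \RR^+$. By symmetry of $\vee$ assume without loss of generality that $x \leqslant y$, so $x \vee y = y$ and we must show $f(y) = f(x) \vee f(y)$, i.e.\ $f(x) \leqslant f(y)$ by Lemma~\ref{lem2.1} — but this is exactly the statement that $f$ is increasing. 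Hence $f$ is an endomorphism of $(\RR^+, \vee)$, completing the proof.

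I do not expect a genuine obstacle here: the heavy lifting was already done in Proposition~\ref{pr2.4}, and the argument is essentially the observation that for the max-monoid ``homomorphism'' and ``order-preserving plus basepoint-preserving'' are the same condition, a translation mediated entirely by Lemma~\ref{lem2.1}. The only point requiring a little care is making sure both implications of Lemma~\ref{lem2.1} get used — the ``$\Rightarrow$'' direction to extract monotonicity from an endomorphism, and the ``$\Leftarrow$'' direction to rebuild the algebraic identity from monotonicity — and that the identity element $0$ is handled via Definition~\ref{def2.5*} rather than being silently assumed.
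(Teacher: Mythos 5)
Your proposal is correct and follows essentially the same route as the paper's proof: both directions are reduced to Proposition~\ref{pr2.4} (increasing plus $f(0)=0$) and the translation between order and the operation $\vee$ via Lemma~\ref{lem2.1}, with the identity element handled through Definition~\ref{def2.5*} and Example~\ref{ex2.11}. The only cosmetic difference is that you phrase the monotonicity extraction with $a \geqslant b$ and $a \vee b = a$ where the paper uses $a \leqslant b$ and $a \vee b = b$.
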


\begin{proof}
Let us prove the inclusion
\begin{equation}\label{mth_preq1}
    \mathbf{End}_{\RR^+} \subseteq \mathbf{P}_{\mathbf{PU}}.
\end{equation}
Suppose that $f \in \mathbf{End}_{\RR^+}$ be arbitrary.
By Proposition~\ref{pr2.4} the membership
\begin{equation}\label{mth_preq2}
    f \in \mathbf{P}_{\mathbf{PU}}
\end{equation}
holds if and only if $f$ is increasing and
\begin{equation}\label{mth_preq3}
    f(0)=0.
\end{equation}

Let us consider arbitrary $a,b \in \RR^+$ satisfying
\begin{equation}\label{mth_preq4}
   a \leqslant b.
\end{equation}

Using Lemma~\ref{lem2.1} we see that \eqref{mth_preq4} holds iff
\begin{equation}\label{mth_preq5}
    a \vee b = b.
\end{equation}
Since $f$ belongs to  $\textbf{End}_{\RR^+}$, equality \eqref{mth_preq5} implies
\begin{equation}\label{mth_preq6}
    f(b) = f( a \vee b) = f(a) \vee f(b).
\end{equation}
Now applying Lemma~\ref{lem2.1}  to \eqref{mth_preq6} we obtain the inequality
$$
f(a) \leqslant f(b).
$$
Thus $f$ is increasing. To prove equality \eqref{mth_preq3} we note that
\begin{equation}\label{mth_preq6_1}
f(1_{\RR^+}) =  1_{\RR^+}
\end{equation}
holds by Definition~\ref{def2.5*}. The last equality and \eqref{ex2.3_eq2} imply \eqref{mth_preq3}. Inclusion \eqref{mth_preq1} follows.

Let us now turn to the proof of the inclusion
\begin{equation}\label{mth_preq7}
   \mathbf{P}_{\mathbf{PU}} \subseteq \textbf{End}_{\RR^+}.
\end{equation}

Let us consider arbitrary $ f \in \mathbf{P}_{\mathbf{PU}} $. To prove \eqref{mth_preq7}, it is enough to show that
\begin{equation}\label{mth_preq8}
   f \in  \textbf{End}_{\RR^+}.
\end{equation}
The last membership holds iff
\begin{equation}\label{mth_preq9}
  f(a \vee b) = f(a) \vee f(b),
\end{equation}
for all $a, b \in \RR^+$, and $f$ satisfies~\eqref{mth_preq6_1}.
By Proposition~\ref{pr2.4},
$$
f \in \mathbf{P}_{\mathbf{PU}}
$$
implies that $f$ is increasing and satisfies
$$
f (0) = 0.
$$
The last equality  and \eqref{ex2.3_eq2} imply \eqref{mth_preq6_1}.
Thus to complete the proof of \eqref{mth_preq7} it suffices to show that \eqref{mth_preq9} holds for all $a,b \in \RR$. Let us do it.

Without loss of generality, we can assume that \eqref{mth_preq4} holds.
By Lemma~\ref{lem2.1}, inequality \eqref{mth_preq4} holds iff
$$
a \vee b = b.
$$
Hence we have
\begin{equation}\label{mth_preq12}
  f( a \vee b) = f (b).
\end{equation}
Since $f$ is increasing, we also obtain the inequality
$$
f(a) \leqslant f(b),
$$
that implies
\begin{equation}\label{mth_preq13}
   f(b) \vee f(a) = f(b)
\end{equation}
by Lemma~\ref{lem2.1}. Now \eqref{mth_preq9} follows from \eqref{mth_preq12} and \eqref{mth_preq13}. Membership relation \eqref{mth_preq8} is proven.
Inclusion~\eqref{mth_preq7} follows.

Since \eqref{mth_preq7} and \eqref{mth_preq1} imply \eqref{mth_eq1}, the proof is completed.
\end{proof}

Proposition~\ref{pr2.12} implies that the set $\mathbf{End}_{\RR^+}$
 together with the operation of function composition is a monoid. The same operation is the multiplication on $\mathbf{P}_{\mathbf{PU}} $. Thus we obtain the following.

\begin{corollary}\label{cor3.3}
The monoids $(\mathbf{End}_{\RR^+}, \circ)$ and $(\mathbf{P}_{\mathbf{PU}}, \circ)$ are the same.
\end{corollary}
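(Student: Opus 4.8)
The plan is to derive this statement directly from Theorem~\ref{mainth} together with the structural results already established in Section~2. First I would invoke Theorem~\ref{mainth}, which gives the equality of underlying sets $\mathbf{End}_{\RR^+} = \mathbf{P}_{\mathbf{PU}}$, so nothing remains to be checked at the level of elements. The only further point is that both monoid structures are assembled from exactly the same data: in each case the binary operation is composition of functions $\RR^+ \to \RR^+$, and Remark~\ref{rem2.14} (applied with $S = \RR^+$) together with Remark~\ref{rem1.2} confirm that the symbol $\circ$ denotes one and the same map in the two settings.

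Next I would reconcile the identity elements. Specializing Proposition~\ref{pr2.12} to $S = (\RR^+, \vee)$ yields $1_{(\mathbf{End}_{\RR^+}, \circ)} = \Id_{\RR^+}$, while Example~\ref{ex2.8} records that $1_{(\mathbf{P}_{\mathbf{PU}}, \circ)} = \Id_{\RR^+}$ as well. Since a monoid is determined by its carrier set, its multiplication, and its identity, and all three coincide, the two monoids are literally the same. For completeness I would also point out that Proposition~\ref{pr2.12} is what guarantees $(\mathbf{End}_{\RR^+}, \circ)$ is closed under $\circ$ and hence genuinely a monoid, so that the equality of sets furnished by Theorem~\ref{mainth} upgrades automatically to an equality of monoids.

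I do not expect any real obstacle: the entire difficulty was already absorbed into the proof of Theorem~\ref{mainth}, where the set-theoretic coincidence $\mathbf{End}_{\RR^+} = \mathbf{P}_{\mathbf{PU}}$ was obtained via Proposition~\ref{pr2.4} and Lemma~\ref{lem2.1}. The corollary is then a short bookkeeping remark---same set, same operation, same unit. The only thing deserving a word of care is making the phrase ``are the same'' precise, namely that it means equality of monoids in the strict sense (identical carriers, operations, and identities); that is exactly what the combination of Theorem~\ref{mainth}, Proposition~\ref{pr2.12}, and Example~\ref{ex2.8} delivers.
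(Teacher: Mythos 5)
Your proposal is correct and follows essentially the same route as the paper: the paper likewise deduces the corollary by combining the set equality $\mathbf{End}_{\RR^+}=\mathbf{P}_{\mathbf{PU}}$ from Theorem~\ref{mainth} with the observation (via Proposition~\ref{pr2.12}) that both monoids carry function composition as their multiplication. Your extra remark reconciling the identity elements is a harmless bit of additional bookkeeping that the paper leaves implicit.
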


The next result is an analog of Corollary~\ref{cor3.3}  for the monoid $(\mathbf{P_U}, \circ)$. Let us denote by $(\mathbf{In}_{\RR^+}, \circ)$ the submonoid of $(\mathbf{End}_{\RR^+}, \circ)$ consisting of all endomorphisms $F : \RR^+ \to \RR^+$ satisfying the equality
$$
Ker (F) = \{1_{\RR^+}\}.
$$

\begin{theorem}\label{th3.4}
The monoids $(\mathbf{In}_{\RR^+}, \circ)$ and $(\mathbf{P_U}, \circ)$  are the same.
\end{theorem}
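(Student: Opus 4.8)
The plan is to combine the previously established identities $\mathbf{P}_{\mathbf{PU}} = \mathbf{End}_{\RR^+}$ (Theorem~\ref{mainth}) with the characterization of $\mathbf{P_U}$ as amenable increasing functions (Theorem~\ref{th2.9}), and then simply observe that, within $\mathbf{End}_{\RR^+}$, amenability is exactly the condition $Ker(F) = \{1_{\RR^+}\}$. Concretely, I would argue the set equality $\mathbf{In}_{\RR^+} = \mathbf{P_U}$; the coincidence of the monoid structures then follows because both sides carry the operation of function composition, and this was already noted for the ambient monoids in Corollary~\ref{cor3.3} and Example~\ref{ex2.8}.

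First I would prove $\mathbf{P_U} \subseteq \mathbf{In}_{\RR^+}$. Take $f \in \mathbf{P_U}$. Since $\mathbf{P_U} \subseteq \mathbf{P}_{\mathbf{PU}} = \mathbf{End}_{\RR^+}$ (Example~\ref{ex2.8} and Theorem~\ref{mainth}), we have $f \in \mathbf{End}_{\RR^+}$, so it remains to check $Ker(f) = \{1_{\RR^+}\}$. By definition $Ker(f) = \{x \in \RR^+ : f(x) = 1_{\RR^+}\} = f^{-1}(0)$ using \eqref{ex2.3_eq2}. By Theorem~\ref{th2.9}, $f$ is amenable, i.e. $f^{-1}(0) = \{0\} = \{1_{\RR^+}\}$, which is precisely the defining condition of $\mathbf{In}_{\RR^+}$.

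Conversely, for $\mathbf{In}_{\RR^+} \subseteq \mathbf{P_U}$, take $f \in \mathbf{In}_{\RR^+} \subseteq \mathbf{End}_{\RR^+} = \mathbf{P}_{\mathbf{PU}}$. By Proposition~\ref{pr2.4}, $f$ is increasing and $f(0) = 0$. The condition $Ker(f) = \{1_{\RR^+}\}$ rewrites, via \eqref{ex2.3_eq2} again, as $f^{-1}(0) = \{0\}$, i.e. $f$ is amenable. Hence $f$ is amenable and increasing, so $f \in \mathbf{P_U}$ by Theorem~\ref{th2.9}. This gives the set equality $\mathbf{In}_{\RR^+} = \mathbf{P_U}$.

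Finally, since $(\mathbf{In}_{\RR^+}, \circ)$ is a submonoid of $(\mathbf{End}_{\RR^+}, \circ)$ by Proposition~\ref{pr2.10} (applied to $S = (\RR^+, \vee)$) and $(\mathbf{P_U}, \circ)$ is a submonoid of $(\mathbf{P}_{\mathbf{PU}}, \circ)$ by Example~\ref{ex2.8}, and the underlying sets coincide while the multiplication in both is function composition, the two monoids are literally the same. I do not anticipate any serious obstacle here: the only point requiring care is the bookkeeping translation between ``$Ker(f) = \{1_{\RR^+}\}$'' and ``$f$ amenable'', which hinges entirely on the identification $1_{\RR^+} = 0$ from \eqref{ex2.3_eq2}; everything else is an immediate assembly of Theorems~\ref{th2.9} and \ref{mainth} and Proposition~\ref{pr2.4}.
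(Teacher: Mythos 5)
Your proof is correct and follows essentially the same route as the paper: both reduce the theorem to the set identity $\mathbf{P_U}=\{f\in\mathbf{End}_{\RR^+}:Ker(f)=\{1_{\RR^+}\}\}$ by combining Theorem~\ref{th2.9}, Proposition~\ref{pr2.4} and Theorem~\ref{mainth} (via Corollary~\ref{cor3.3}), with the translation between amenability and trivial kernel resting on $1_{\RR^+}=0$. The only cosmetic difference is that you argue the two inclusions separately, whereas the paper writes a single chain of equivalent descriptions of the set.
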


\begin{proof}
It was noted in Example~\ref{ex2.11} that $(\mathbf{P}_{\mathbf{U}}, \circ)$ is a submonoid  of the monoid $(\mathbf{P}_{\mathbf{PU}}, \circ)$. Similarly, by Proposition~\ref{pr2.10}, $(\mathbf{In}_{\RR^+},\circ)$ is a submonoid of the monoid $(\mathbf{End}_{\RR^+}, \circ)$. Moreover, the equality
\begin{equation}\label{th3.4_preq1_1}
(\mathbf{End}_{\RR^+}, \circ) = (\mathbf{P_{PU}}, \circ)
\end{equation}
holds by Corollary~\ref{cor3.3}. Thus it suffices to show that the sets $\mathbf{P_U}$ and $\mathbf{In}_{\RR^+}$  are the same,
\begin{equation}\label{th3.4_preq1}
\mathbf{P_{U}} = \mathbf{In}_{\RR^+}.
\end{equation}

Let us consider an arbitrary function $g: \RR^+ \to \RR^+$. Theorem~\ref{th2.9} and Proposition~\ref{pr2.4} imply that $g$ is ultrametric-preserving iff $g$ is pseudoultrametric-preserving and amenable.

Hence the equality
\begin{equation}\label{th3.4_preq2}
\mathbf{P_U}= \{ f \in \mathbf{P_{PU}} : f^{-1}(0)= \{0\} \}
\end{equation}
holds. Equality~\eqref{ex2.3_eq2} implies that
\begin{equation}\label{th3.4_preq4}
 \{0\}  = \{ 1_{\RR^+} \}.
\end{equation}
Now using \eqref{th3.4_preq1_1} and \eqref{th3.4_preq4} we can rewrite \eqref{th3.4_preq2} as
\begin{equation}\label{th3.4_preq5}
\mathbf{P_U}= \{ f \in \mathbf{End}_{\RR^+} : f^{-1}(1_{\RR^+})= \{1_{\RR^+}\} \}.
\end{equation}
It follows from~\eqref{eq_eq1}  with $S=(\RR^+, \vee) = T$ that the equality
$$
f^{-1} (1_{\RR^+}) = \{ 1_{\RR^+}\}
$$
holds if and only if
$$
Ker (f) = \{ 1_{\RR^+}\}.
$$
Hence~\eqref{th3.4_preq5} is equivalent to the equality
\begin{equation}\label{th3.4_preq6}
\mathbf{P_U}= \{ f \in \mathbf{End}_{\RR^+} : Ker (f) = \{ 1_{\RR^+}\} \}.
\end{equation}
Equality~\eqref{pr2.10_eq1} with $S = (\RR^+, \vee)$ gives us
\begin{equation}\label{th3.4_preq7}
\mathbf{In}_{\RR^+}= \{ 1 \in \mathbf{End}_{\RR^+} : Ker (f) = \{ 1_{\RR^+}\} \}.
\end{equation}
Now \eqref{th3.4_preq1} follows from \eqref{th3.4_preq6} and \eqref{th3.4_preq7}. The proof is completed.
\end{proof}

The set $\mathbf{P_U}$ of ultrametric-preserving functions was also studied in \cite{Dov24TA, BDa2024, BDS2021} and \cite{VD2021MS}. In particular, it was proved in Theorem~31 of \cite{BDa2024} that for every submonoid $\mathbf{A}$ of the monoid $(\mathbf{P_U}, \circ)$ there is a subclass $\mathbf{X}$ of the class $\mathbf{U}$ of all ultrametric spaces such that
$\mathbf{P_X} = \mathbf{A},$ where $\mathbf{P_X}$ is the set introduced in Definition~\ref{def1.8}.  The next our goal is to generalize this result  on the submonoids of the monoid $(\mathbf{P_{PU}}, \circ)$.

\begin{lemma}\label{lem3.5}
Let $\mathbf{X} \subseteq \mathbf{PU}$ be nonempty. Then $\mathbf{P_X}$ is a monoid with respect to the operation of function composition and, in addition, the identical mapping $\Id_{\mathbb{R}^+} : \RR^+ \to \RR^+$ is the identify of the monoid $(\mathbf{P_X}, \circ)$,
\begin{equation}\label{lem3.5_eq1}
1_{\mathbf{P_X}} = \Id_{\RR^+}.
\end{equation}
\end{lemma}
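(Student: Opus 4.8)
The plan is to verify the monoid axioms for $(\mathbf{P_X},\circ)$ directly from Definition~\ref{def1.8}. First I would show $\Id_{\RR^+}\in\mathbf{P_X}$, which will also guarantee that $\mathbf{P_X}$ is nonempty (so that it can be a semigroup at all). Indeed, for every set $Y$ and every map $\rho\colon Y\times Y\to\RR^+$ we have $\Id_{\RR^+}\circ\rho=\rho$, so the implication ``$(Y,\rho)\in\mathbf{X}\Rightarrow(Y,\Id_{\RR^+}\circ\rho)\in\mathbf{X}$'' is trivially true; hence $\Id_{\RR^+}$ satisfies the defining condition of $\mathbf{P_X}$.

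The main step is closure under composition: given $f,g\in\mathbf{P_X}$ I want $f\circ g\in\mathbf{P_X}$. Take an arbitrary $(Y,\rho)\in\mathbf{X}$. Since $g\in\mathbf{P_X}$, we get $(Y,g\circ\rho)\in\mathbf{X}$; applying the membership $f\in\mathbf{P_X}$ to the pseudoultrametric space $(Y,g\circ\rho)$ then yields $(Y,f\circ(g\circ\rho))\in\mathbf{X}$. Because composition of mappings is associative, $f\circ(g\circ\rho)=(f\circ g)\circ\rho$ as maps $Y\times Y\to\RR^+$, so $(Y,(f\circ g)\circ\rho)\in\mathbf{X}$. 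As $(Y,\rho)$ was an arbitrary element of $\mathbf{X}$, Definition~\ref{def1.8} gives $f\circ g\in\mathbf{P_X}$. Thus $(\mathbf{P_X},\circ)$ is a semigroup, associativity being inherited from the associativity of composition of functions $\RR^+\to\RR^+$. Finally, $\Id_{\RR^+}\circ f=f=f\circ\Id_{\RR^+}$ holds for every $f\colon\RR^+\to\RR^+$, in particular for every $f\in\mathbf{P_X}$; combined with $\Id_{\RR^+}\in\mathbf{P_X}$ this shows that $\Id_{\RR^+}$ is the identity of $(\mathbf{P_X},\circ)$, establishing \eqref{lem3.5_eq1}.

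I do not expect any genuine obstacle. The only point needing a little care is the bookkeeping in the closure step: one must keep straight that $\rho$ has domain $Y\times Y$ while $f$ and $g$ have domain $\RR^+$, so that the identity $f\circ(g\circ\rho)=(f\circ g)\circ\rho$ is exactly the statement that the triple evaluation ``apply $\rho$, then $g$, then $f$'' may be regrouped. Everything else is a formal consequence of the definitions and of standard properties of function composition; note in fact that the hypothesis $\mathbf{X}\neq\varnothing$ is not strictly needed for $\mathbf{P_X}$ to be a monoid, since $\Id_{\RR^+}$ always belongs to it.
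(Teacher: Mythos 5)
Your proof is correct and follows essentially the same route as the paper's: closure under composition via two applications of Definition~\ref{def1.8} together with associativity of composition, plus the observation that $\Id_{\RR^+}\circ\rho=\rho$ makes the identity map belong to $\mathbf{P_X}$ and act as the identity. Your side remark that the hypothesis $\mathbf{X}\neq\varnothing$ is not actually needed is also accurate, since for $\mathbf{X}=\varnothing$ the defining condition is vacuous.
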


\begin{proof}
It follows from Definition~\ref{def1.8}. Indeed, let us consider arbitrary $f,g \in \mathbf{P_X}$ and $(Y, \rho ) \in \mathbf{X}$. Then we have
$$
(Y, g \circ \rho) \in \mathbf{X}
$$
by Definition~\ref{def1.8}. Applying this definition again we obtain
$$
(Y, f \circ (g \circ \rho)) \in \mathbf{X}.
$$
Since $(f \circ g) \circ \rho = f \circ(g \circ \rho)$ holds, Definition~\ref{def1.8} also implies $(f \circ g) \in \mathbf{P_X}$. It is also easy to see that
$$
(Y, \Id_{\RR^+}  \circ \rho) = (Y,  \rho) \quad \textrm{and} \quad
\Id_{\RR^+} \circ f = f \circ \Id_{\RR^+} = f.
$$
Thus $(\mathbf{P_X}, \circ)$ is a monoid and equality \eqref{lem3.5_eq1} holds.
The proof is completed.
\end{proof}

\begin{theorem}\label{th3.5}
    Let $\mathbf{A}$ be a nonempty subset  of $\mathbf{P_{PU}}$. Then the following statements  are equivalent.
    \begin{itemize}
        \item[(i)] $\mathbf{A}$ is a submonoid of $(\mathbf{P_{PU}},  \circ)$.

        \item[(ii)] $\mathbf{A}$ is a submonoid of $(\mathbf{End}_{\RR^+},  \circ)$.

        \item[(iii)] There is a nonempty $\mathbf{X} \subseteq \mathbf{PU}$ that satisfies
        \begin{equation}\label{th3.5_eq1}
        \mathbf{P_X} = \mathbf{A}.
        \end{equation}
    \end{itemize}
\end{theorem}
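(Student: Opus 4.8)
The equivalence $(i) \Leftrightarrow (ii)$ is essentially immediate: by Corollary~\ref{cor3.3} the monoids $(\mathbf{P_{PU}}, \circ)$ and $(\mathbf{End}_{\RR^+}, \circ)$ are literally the same object, so a subset $\mathbf{A}$ is a submonoid of one precisely when it is a submonoid of the other. I would dispose of this first in a single sentence.

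The substantive content is $(i) \Leftrightarrow (iii)$. For the implication $(iii) \Rightarrow (i)$: suppose $\mathbf{X} \subseteq \mathbf{PU}$ is nonempty with $\mathbf{P_X} = \mathbf{A}$. By Lemma~\ref{lem3.5}, $(\mathbf{P_X}, \circ)$ is a monoid with identity $\Id_{\RR^+}$, and $\Id_{\RR^+}$ is also the identity of $(\mathbf{P_{PU}}, \circ)$ by \eqref{ex2.8_eq1}. So it only remains to check that $\mathbf{P_X} \subseteq \mathbf{P_{PU}}$, i.e. that every space-preserving function for a subclass of $\mathbf{PU}$ is pseudoultrametric-preserving — but wait, this is \emph{not} automatic in general, so here one must use the hypothesis $\mathbf{A} \subseteq \mathbf{P_{PU}}$ built into the theorem statement: since $\mathbf{A} = \mathbf{P_X}$ and $\mathbf{A} \subseteq \mathbf{P_{PU}}$, we immediately get $\mathbf{P_X} \subseteq \mathbf{P_{PU}}$, and combined with the monoid structure from Lemma~\ref{lem3.5} this shows $\mathbf{A}$ is a submonoid of $(\mathbf{P_{PU}}, \circ)$.

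The real work is $(i) \Rightarrow (iii)$: given a submonoid $\mathbf{A}$ of $(\mathbf{P_{PU}}, \circ)$, construct an explicit $\mathbf{X} \subseteq \mathbf{PU}$ with $\mathbf{P_X} = \mathbf{A}$. The natural candidate, mirroring Theorem~31 of \cite{BDa2024}, is to take $\mathbf{X}$ to consist of pseudoultrametric spaces built from the functions in $\mathbf{A}$ themselves — for instance, for each $f \in \mathbf{A}$ form the space $(\RR^+, f \circ d^+)$ where $d^+$ is the ultrametric from Example~\ref{ex1.2}, and let $\mathbf{X} = \{ (\RR^+, f \circ d^+) : f \in \mathbf{A} \}$ (this is where Propositions~\ref{prop3.6} and \ref{prop3.11} are headed). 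One then proves the two inclusions. For $\mathbf{A} \subseteq \mathbf{P_X}$: take $g \in \mathbf{A}$ and a space $(\RR^+, f \circ d^+) \in \mathbf{X}$; since $\mathbf{A}$ is closed under composition, $g \circ f \in \mathbf{A}$, so $(\RR^+, (g \circ f) \circ d^+) = (\RR^+, g \circ (f \circ d^+)) \in \mathbf{X}$, giving $g \in \mathbf{P_X}$. For $\mathbf{P_X} \subseteq \mathbf{A}$: take $g \in \mathbf{P_X}$; since $\Id_{\RR^+} \in \mathbf{A}$ (as $\mathbf{A}$ is a submonoid and $1_{\mathbf{P_{PU}}} = \Id_{\RR^+}$), the space $(\RR^+, \Id_{\RR^+} \circ d^+) = (\RR^+, d^+)$ lies in $\mathbf{X}$, so $(\RR^+, g \circ d^+) \in \mathbf{X}$, which by the defining form of $\mathbf{X}$ forces $g \circ d^+ = f \circ d^+$ for some $f \in \mathbf{A}$; evaluating at pairs $(p,q)$ with $p = q$ never helps, but evaluating at pairs with $p \neq q$ and $\max\{p,q\}$ ranging over all of $(0,\infty)$ forces $g = f$ on $(0,\infty)$, and $g(0) = 0 = f(0)$ since both are pseudoultrametric-preserving, hence $g = f \in \mathbf{A}$.

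The main obstacle is the last step — showing $\mathbf{P_X} \subseteq \mathbf{A}$, i.e. that membership in $\mathbf{X}$ is \emph{rigid enough} to pin down the function. The subtlety is that two different functions can induce the same space, or a space $(\RR^+, g \circ d^+)$ might coincide with $(\RR^+, f \circ d^+)$ for some $f \in \mathbf{A}$ without $g$ itself lying in $\mathbf{A}$ — so one must check that the map $f \mapsto (\RR^+, f \circ d^+)$ is injective on pseudoultrametric-preserving functions and that $\mathbf{X}$ contains no "accidental" spaces. Because $d^+(p,q) = \max\{p,q\}$ takes every positive value, the equality $g \circ d^+ = f \circ d^+$ of functions on $\RR^+ \times \RR^+$ does force $g = f$ on $(0,\infty)$, and the value at $0$ is fixed; this is exactly why $d^+$ (rather than, say, a bounded ultrametric) is the right building block, and it is the reason the construction goes through. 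I would carry out this injectivity verification carefully as the heart of the proof.
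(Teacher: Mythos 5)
Your proposal is correct and follows essentially the same route as the paper: $(i)\Leftrightarrow(ii)$ via Corollary~\ref{cor3.3}, $(iii)\Rightarrow(i)$ via Lemma~\ref{lem3.5} together with the hypothesis $\mathbf{A}\subseteq\mathbf{P_{PU}}$, and $(i)\Rightarrow(iii)$ via the same construction $\mathbf{X}=\{(\RR^+, f\circ d^+): f\in\mathbf{A}\}$ with the surjectivity of $d^+$ forcing $g=f$. The only cosmetic remark is that you need not invoke that $g\in\mathbf{P_X}$ is pseudoultrametric-preserving to get $g(0)=f(0)$ (which is not yet known at that point); it follows directly by evaluating $g\circ d^+=f\circ d^+$ at a diagonal pair, exactly as the paper's surjectivity statement already covers.
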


\begin{proof}
The validity of the equivalence $(i) \Longleftrightarrow (ii)$ follows  from Corollary~\ref{cor3.3}.

Let us prove the validity of the implication $(i) \Longrightarrow (iii)$.

Suppose that $\mathbf{A}$ is a submonoid  of the monoid $(\mathbf{P_{PU}}, \circ)$. Let $d^{+}$ be the ultrametric on $\RR^+$ defined in Example~\ref{ex1.2}. Then using formula \eqref{ex1.2_eq1} it is easy to see that  $d^+$ is  a surjective mapping on $\RR^+ \times \RR^+$,
\begin{equation}\label{th3.5_preq2}
\RR^+ = \{ d(p,q) : p,q \in \RR^+\}.
\end{equation}
Since $\mathbf{A}$ is a submonoid of $(\mathbf{P_{PU}}, \circ)$, the inclusion
$$
\mathbf{A} \subseteq \mathbf{P_{PU}}
$$
holds. Consequently, for every $f \in \mathbf{A}$, $(\RR^+, f \circ d^+)$ is a pseudoultrametric space by Definition~\ref{def2.3}.

Write
\begin{equation}\label{th3.5_preq3}
\mathbf{X} : = \{ (\RR^+, g \circ d^+) : g \in \mathbf{A}\}.
\end{equation}
Since $(\RR^+, d^+)$ belongs $\mathbf{U}$ and $\mathbf{A} \subseteq \mathbf{P_{PU}}$, we have
$$
\mathbf{X} \subseteq \mathbf{PU}.
$$
We claim  that $\mathbf{X}$  satisfies  equation \eqref{th3.5_eq1}.

To prove the claim we must show that  for each $f: \RR^+ \to \RR^+$ the equivalence
\begin{equation}\label{th3.5_preq4}
(f \in \mathbf{A}) \Longleftrightarrow \left( (\RR^+, f \circ (g \circ d^+) ) \in \mathbf{X} \right)
\end{equation}
is valid for every $g \in \mathbf{A}$. (See Definition~\ref{def1.8}).

Let us consider an arbitrary $f \in \mathbf{A}$. Then $f \circ g \in \mathbf{A}$ holds for every $g \in \mathbf{A}$ because $\mathbf{A}$ is a submonoid of $(\mathbf{P_{PU}}, \circ)$. Thus, for every $g \in \mathbf{A}$,
\begin{equation}\label{th3.5_preq5}
 \left( \RR^+, (f \circ g) \circ d^+ \right) \in \mathbf{X}
\end{equation}
holds by \eqref{th3.5_preq3}. Since the function composition is associative, we may rewrite \eqref{th3.5_preq5} as
$$
\left( \RR^+, f \circ (g \circ d^+) \right) \in \mathbf{X} .
$$
Consequently, the implication
$$
(f \in \mathbf{A}) \Longrightarrow \left( (\RR^+, f \circ (g \circ d^+) ) \in \mathbf{X} \right)
$$
is valid for every $g \in \mathbf{A}$.

Let us consider now an arbitrary $f : \RR^+ \to \RR^+$ and suppose that the membership
\begin{equation}\label{th3.5_preq6}
 \left( \RR^+, f \circ (g \circ d^+) \right) \in \mathbf{X}
\end{equation}
is valid for every $g \in \mathbf{A}$. We must show that
\begin{equation}\label{th3.5_preq7}
f \in \mathbf{A}.
\end{equation}
To prove the validity of \eqref{th3.5_preq7} we note the identical mapping $\Id_{\RR^+}$ belongs to $\mathbf{A}$ by \eqref{ex2.8_eq1}. Consequently \eqref{th3.5_preq6} with $g = \Id_{\RR^+}$ implies
\begin{equation}\label{th3.5_preq8}
 \left( \RR^+, (f \circ  d^+) \right) \in \mathbf{X}.
\end{equation}
It follows from \eqref{th3.5_preq8} and \eqref{th3.5_preq3} that there is $g \in \mathbf{A}$ such that
$$
g \circ d^+ = f \circ d^+.
$$
The last equality and \eqref{th3.5_preq2} imply the equality $g =f$. Consequantly \eqref{th3.5_preq7} is valid. Thus equivalence \eqref{th3.5_preq4} is valid for every $g \in \mathbf{A}$ as required.

Let us prove the validity of the implication $(iii) \Longrightarrow (i)$.

Suppose that there is a nonempty $\mathbf{X} \subseteq \mathbf{PU}$ satisfying~\eqref{th3.5_eq1}. We must show that $\mathbf{A}$ is a submonoid of $(\mathbf{P_{PU}}, \circ)$.

Lema~\ref{lem3.5} and equality~\eqref{s3_eq2} imply that $\mathbf{A}$ is a monoid. Using Definition~\ref{def2.7}, equality~\eqref{lem3.5_eq1} and the inclusion $\mathbf{A} \subseteq \mathbf{P_{PU}}$ we obtain that $\mathbf{A}$ is a submonoid of $(\mathbf{P_{PU}}, \circ)$.

The proof is completed.
\end{proof}

Analysing the above proof, we get the following.

\begin{proposition}\label{prop3.6}
Let $\mathbf{A}$ be a submonoid of the monoid $\mathbf{P_{PU}}$ and let
\begin{equation*}\label{pr3.6_eq1}
\mathbf{X} := \{(\RR^+, g \circ d^+)  : g \in \mathbf{A}\}.
\end{equation*}
Then the equality
\begin{equation*}\label{pr3.6_eq2}
    \mathbf{P_X} = \mathbf{A}
\end{equation*}
holds.
\end{proposition}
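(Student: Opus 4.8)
The plan is to recognise that Proposition~\ref{prop3.6} merely isolates, as a standalone statement, the construction already performed inside the proof of the implication $(i)\Longrightarrow(iii)$ of Theorem~\ref{th3.5}. So the proof will amount to re-running that argument with the concrete class $\mathbf{X}=\{(\RR^+, g\circ d^+):g\in\mathbf{A}\}$ fixed from the start. First I would record that $\mathbf{X}$ is a legitimate nonempty subclass of $\mathbf{PU}$: it is indexed by the set $\mathbf{A}\subseteq\mathbf{P_{PU}}$, and for every $g\in\mathbf{A}$ the pair $(\RR^+, g\circ d^+)$ is a pseudoultrametric space because $(\RR^+, d^+)$ is an ultrametric space (Example~\ref{ex1.2}) and $g$ is pseudoultrametric-preserving (Definition~\ref{def2.3}); moreover $\mathbf{X}\neq\varnothing$ since $\Id_{\RR^+}\in\mathbf{A}$ by \eqref{ex2.8_eq1}.

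By Definition~\ref{def1.8}, the desired equality $\mathbf{P_X}=\mathbf{A}$ unwinds to the following assertion: for every $f:\RR^+\to\RR^+$, one has $f\in\mathbf{A}$ if and only if $(\RR^+, f\circ(g\circ d^+))\in\mathbf{X}$ for every $g\in\mathbf{A}$. The forward direction is the easy half: if $f\in\mathbf{A}$ then $f\circ g\in\mathbf{A}$ because $\mathbf{A}$ is closed under composition, so $(\RR^+,(f\circ g)\circ d^+)\in\mathbf{X}$ by the very definition of $\mathbf{X}$, and associativity of composition rewrites this as $(\RR^+, f\circ(g\circ d^+))\in\mathbf{X}$.

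For the converse I would specialise the hypothesis to $g=\Id_{\RR^+}\in\mathbf{A}$, obtaining $(\RR^+, f\circ d^+)\in\mathbf{X}$; hence there is some $g_0\in\mathbf{A}$ with $g_0\circ d^+=f\circ d^+$. The one point where a small argument is genuinely needed is that $d^+:\RR^+\times\RR^+\to\RR^+$ is surjective — immediate from \eqref{ex1.2_eq1}, since any $r\in\RR^+$ equals $d^+(r,0)$ (and $0=d^+(0,0)$) — which lets one cancel $d^+$ on the right and conclude $g_0=f$, so $f=g_0\in\mathbf{A}$. I do not expect a real obstacle anywhere: the whole proposition is a bookkeeping corollary of Theorem~\ref{th3.5}, and the only things to be careful about are keeping the two occurrences of the letter $g$ in distinct roles and invoking associativity of $\circ$ and surjectivity of $d^+$ at exactly the right moments.
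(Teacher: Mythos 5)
Your proposal is correct and follows essentially the same route as the paper, which itself derives Proposition~\ref{prop3.6} by extracting the argument for the implication $(i)\Longrightarrow(iii)$ in the proof of Theorem~\ref{th3.5}: the forward inclusion via closure of $\mathbf{A}$ under composition and associativity, and the reverse inclusion via specialising to $g=\Id_{\RR^+}$ and cancelling the surjective $d^+$. No gaps.
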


In the remainder of this section we will formulate some analogies to  Proposition~\ref{prop3.6}.

\begin{definition}\label{def3.7}
A nonempty subset $R$ of a semigroup $(S, \ast)$ is called a \emph{right ideal} of $(S, \ast )$ if
$$
r \ast s \in R
$$
holds whenever $r \in R$ and $s \in S$.
\end{definition}

\begin{definition}\label{def3.6}
Let $\mathbb{N} = \{1,2,3, \ldots \}$, $(S, \ast)$ be a semigroup and let $A$ be a nonempty subset of $S$. Then
\begin{equation}\label{def3.6_eq1}
[A]_S := \{ x_1 \ast x_2 \ast \ldots \ast x_n : \ x_1, x_2, \ldots, x_n \in A, \  n \in \mathbb{N} \}
\end{equation}
is a subsemigroup of $(S, \ast)$, called  the \emph{subsemigroup generated by} $A$.
\end{definition}

Definition~\ref{def3.6}, in particular, shows that $[A]_S$ is the smallest subsemigroup of $(S, \ast)$ containing the set $A$.

In the following lemma we describe the largest right ideal $\overline{R}$ of $[A]_S$ containing in $A$.

\begin{lemma}\label{lem3.8}
Let $(S, \ast)$ be a semigroup and let $A \subseteq S$ be nonempty. Denote by $R_A$ the set of all right ideals $R$ of  $[A]_S$ satisfying
\begin{equation}\label{lem3.8_eq1}
R \subseteq A.
\end{equation}
 Then the set
\begin{equation}\label{lem3.8_eq2}
\overline{R} := \bigcup\limits_{R\in R_A} R,
\end{equation}
is either empty
\begin{equation}\label{lem3.8_eq3}
\overline{R} = \emptyset
\end{equation}
or it is a right ideal of the semigroup $[A]_S$.
\end{lemma}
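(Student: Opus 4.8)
The plan is to show that the union of right ideals contained in $A$ is itself a right ideal (when nonempty), which is essentially the standard fact that an arbitrary union of right ideals is a right ideal, restricted here to the family $R_A$. First I would dispose of the trivial case: if $\overline{R} = \emptyset$, then \eqref{lem3.8_eq3} holds and there is nothing more to prove. So assume $\overline{R} \neq \emptyset$; then $\overline{R}$ is a nonempty subset of $[A]_S$, since every $R \in R_A$ is by definition a subset of $[A]_S$ (indeed a subset of $A \subseteq S$, but membership in $[A]_S$ is what matters for being a right ideal of $[A]_S$). Note also that $\overline{R} \subseteq A$ holds automatically from \eqref{lem3.8_eq1}, so $\overline{R}$ is a candidate member of $R_A$ itself once we verify the ideal property; this gives the ``largest'' claim for free.

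Next I would verify the defining property of Definition~\ref{def3.7}: take arbitrary $r \in \overline{R}$ and $s \in [A]_S$, and show $r \ast s \in \overline{R}$. By \eqref{lem3.8_eq2} there is some $R \in R_A$ with $r \in R$. Since $R$ is a right ideal of $[A]_S$ and $s \in [A]_S$, we get $r \ast s \in R$ directly from Definition~\ref{def3.7} applied to $R$. Finally, $R \subseteq \overline{R}$ by \eqref{lem3.8_eq2}, so $r \ast s \in \overline{R}$, as required. This shows $\overline{R}$ is a right ideal of $[A]_S$.

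There is essentially no obstacle here — the statement is a routine ``union of ideals is an ideal'' argument — so the only thing to be careful about is the bookkeeping: that the ambient semigroup in which we check the ideal property is $[A]_S$ (not $S$), that $s$ ranges over $[A]_S$ and not over $A$, and that nonemptiness of $\overline{R}$ is exactly the hypothesis separating the two conclusions. One might also remark, for later use, that $\overline{R}$ being nonempty is equivalent to $R_A$ containing at least one nonempty right ideal, and in that case $\overline{R}$ is the unique largest element of $R_A$, justifying the phrase preceding the lemma; but this remark is not needed for the proof of the displayed dichotomy itself.
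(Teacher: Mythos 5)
Your proof is correct and follows essentially the same route as the paper's: split off the empty case, then for $r\in\overline{R}$ pick some $R\in R_A$ containing $r$, use that $R$ is a right ideal of $[A]_S$ to get $r\ast s\in R\subseteq\overline{R}$. In fact you are slightly more careful than the paper, which lets $s$ range over $S$ rather than over $[A]_S$ as the definition of a right ideal of $[A]_S$ requires; your bookkeeping on this point is the correct reading.
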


\begin{proof}
If $R_A = \emptyset$, then \eqref{lem3.8_eq3} evidently holds. Suppose that $R_A \neq \emptyset$, then the set $\overline{R}$ is also nonempty,
$$
\overline{R} \neq \emptyset.
$$
Let us consider arbitrary $r \in \overline{R}$ and $s\in S$. We must show that
\begin{equation}\label{lem3.8_preq1}
r \ast s  \in \overline{R}.
\end{equation}
In order to do this, it is enough to note that $r \in \overline{R}$ implies that $r \in R$ for some $R \in R_A$ by \eqref{lem3.8_eq2}. Hence
$$
r \ast s \in R \quad \textrm{and} \quad R \subseteq \overline{R},
$$
hold, that implies \eqref{lem3.8_preq1}. Thus $\overline{R}$ is a right ideal of the semigroup $[A]_S$.
\end{proof}

If a semigroup $(S, \ast)$ has no identity element then it is easy to adjoin an element $1 \neq S$ to form a monoid $S^1$. (See, for example, \cite{Howie2003} page 2). Below we use a special modification of this procedure.

Let $M= (M, \ast, 1_M)$ be a monoid and let $S \subseteq M$ be either empty or be a subsemigroup of the monoid $M$. We can expand $S$ to a smallest submonoid $S^{1_M}$ of $M$ as
\begin{equation}\label{s3_eq2}
    S^{1_M} := \left\{
  \begin{array}{ll}
    S, \quad & \hbox{if} \quad 1_M \in S, \\
    S \cup \{1_M\}, \quad & \hbox{otherwise.}
  \end{array}
\right.
\end{equation}
In particular, the equality
$$
S^{1_M} = \{1_M\}
$$
holds if and only if we have either $S=\{1_M\}$ or $S=\emptyset$.
It should be also noted here that, in general, $S^{1_M} \neq S^1$ even if $S$ is a monoid.

\begin{example}\label{ex3.10}
   Let $A=\{a\}$  be an one-point subset of $\mathbb{R}^+$. Then $A=(A, \vee)$ is a monoid and this monoid is a subsemigroup of the monoid $(\mathbb{R}^+, \vee)$ but Definition~\ref{def2.7} implies that $A$ is a submonoid of $(\RR^+,\vee)$ if and only if $a=0$. Indeed, using \eqref{s3_eq2} it is easy to see that
   $$
 A^{1_{\RR^+}} = \{0,a\} \neq \{a\}= A^1
   $$
   whenever $a>0$.
\end{example}

The next proposition is an equivalent to Proposition~\ref{prop3.6}.

\begin{proposition}\label{prop3.11}
    Let $\mathbf{A}$ be a subset of the monoid $\mathbf{P_{PU}}= (\mathbf{P_{PU}}, \circ, 1_{\mathbf{P_{PU}}})$,
    \begin{equation*}\label{pr3.11_eq1}
        \mathbf{X} := \{ (\RR^+, g \circ d^+) : g \in\mathbf{A} \}
    \end{equation*}
    and let $R_{\mathbf{A}}$ be the set of all right ideals  of $[\mathbf{A}]_{\mathbf{P_{PU}}}$. Write
    \begin{equation*}\label{pr3.11_eq2}
        \overline{R} := \bigcup\limits_{R \in R_{\mathbf{A}}} {R}.
    \end{equation*}
    If
    \begin{equation}\label{pr3.11_eq3}
        1_{\mathbf{P_{PU}}} \in \mathbf{A} \cap \overline{R},
    \end{equation}
    then the equality
    \begin{equation}\label{pr3.11_eq4}
        \overline{R}^{1_{\mathbf{P_{PU}}}} = \mathbf{P}_{\mathbf{X}}
    \end{equation}
    holds.
\end{proposition}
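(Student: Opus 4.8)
The plan is to show that hypothesis~\eqref{pr3.11_eq3} is strong enough to force $\mathbf{A}$ to coincide with the subsemigroup $[\mathbf{A}]_{\mathbf{P_{PU}}}$ it generates, so that $\mathbf{A}$ is in fact a submonoid of $(\mathbf{P_{PU}}, \circ)$, and then to deduce~\eqref{pr3.11_eq4} from Proposition~\ref{prop3.6}.

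First I would unpack the membership $1_{\mathbf{P_{PU}}} \in \overline{R}$. By the definition of $\overline{R}$ there is a right ideal $R_0$ of the semigroup $[\mathbf{A}]_{\mathbf{P_{PU}}}$ with $R_0 \subseteq \mathbf{A}$ and $1_{\mathbf{P_{PU}}} \in R_0$. Since $1_{\mathbf{P_{PU}}} = \Id_{\RR^+}$, for every $s \in [\mathbf{A}]_{\mathbf{P_{PU}}}$ we have $s = 1_{\mathbf{P_{PU}}} \circ s$, and the right-ideal property (Definition~\ref{def3.7}) gives $1_{\mathbf{P_{PU}}} \circ s \in R_0$; hence $[\mathbf{A}]_{\mathbf{P_{PU}}} \subseteq R_0$. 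Together with $R_0 \subseteq \mathbf{A} \subseteq [\mathbf{A}]_{\mathbf{P_{PU}}}$ (the last inclusion by Definition~\ref{def3.6}) this gives
$$
[\mathbf{A}]_{\mathbf{P_{PU}}} = R_0 = \mathbf{A}.
$$
In particular $\mathbf{A}$ is a subsemigroup of $(\mathbf{P_{PU}}, \circ)$ and, since $1_{\mathbf{P_{PU}}} \in \mathbf{A}$, Definition~\ref{def2.7} shows that $\mathbf{A}$ is a submonoid of $(\mathbf{P_{PU}}, \circ)$. The displayed equalities also give $R_0 \subseteq \overline{R} \subseteq \mathbf{A} = R_0$, so $\overline{R} = \mathbf{A}$; and because $1_{\mathbf{P_{PU}}} \in \overline{R}$, formula~\eqref{s3_eq2} yields $\overline{R}^{\,1_{\mathbf{P_{PU}}}} = \overline{R} = \mathbf{A}$.

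It remains to identify $\mathbf{P_X}$ with $\mathbf{A}$. Since $\mathbf{A}$ is now a submonoid of $\mathbf{P_{PU}}$ and $\mathbf{X} = \{(\RR^+, g \circ d^+) : g \in \mathbf{A}\}$ is precisely the class appearing in Proposition~\ref{prop3.6}, that proposition gives $\mathbf{P_X} = \mathbf{A}$, and combining this with $\overline{R}^{\,1_{\mathbf{P_{PU}}}} = \mathbf{A}$ proves~\eqref{pr3.11_eq4}. There is no substantial obstacle here: the whole argument hinges on the single observation that a right ideal containing a two-sided identity must absorb the entire generated subsemigroup, which collapses $\overline{R}$, $\mathbf{A}$ and $[\mathbf{A}]_{\mathbf{P_{PU}}}$ onto one another. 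The only points needing a little care are the convention that the members of $R_{\mathbf{A}}$ are the right ideals of $[\mathbf{A}]_{\mathbf{P_{PU}}}$ that are contained in $\mathbf{A}$ (as in Lemma~\ref{lem3.8}) and the routine use of~\eqref{s3_eq2} to dispose of the superscript $(\cdot)^{1_{\mathbf{P_{PU}}}}$; one could alternatively invoke Lemma~\ref{lem3.8} to know at the outset that the nonempty $\overline{R}$ is itself a right ideal of $[\mathbf{A}]_{\mathbf{P_{PU}}}$, but selecting a single $R_0 \ni 1_{\mathbf{P_{PU}}}$ is slightly more economical.
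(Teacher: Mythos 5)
Your proof is correct and follows essentially the same route as the paper: reduce everything to showing $\overline{R} = \mathbf{A} = \overline{R}^{\,1_{\mathbf{P_{PU}}}}$ is a submonoid of $(\mathbf{P_{PU}},\circ)$ via the observation that a right ideal containing the identity absorbs all of $[\mathbf{A}]_{\mathbf{P_{PU}}}$, and then invoke Proposition~\ref{prop3.6}. The only cosmetic difference is that you work with a single ideal $R_0\ni 1_{\mathbf{P_{PU}}}$ instead of first applying Lemma~\ref{lem3.8} to $\overline{R}$, and you rightly note the convention (implicit in the paper's proof) that $R_{\mathbf{A}}$ consists of the right ideals contained in $\mathbf{A}$.
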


\begin{proof}
    Let \eqref{pr3.11_eq3} hold. We must prove equality~\eqref{pr3.11_eq4}.

   We claim that
    \begin{equation}\label{pr3.11_preq2}
        \overline{R} = \overline{R}^{1_{\mathbf{P_{PU}}}}.
    \end{equation}
    Indeed, \eqref{pr3.11_eq3} implies that $\overline{R}$ is a nonempty set. Consequently $\overline{R}$ is a right ideal of $[\mathbf{A}]_{\mathbf{P_{PU}}}$ by Lemma~\ref{lem3.8}. Since every right ideal of a semigroup is a subsemigroup of this semigroup, $\overline{R}$ is a subsemigroup of $[\mathbf{A}]_{\mathbf{P_{PU}}}$. The semigroup $[\mathbf{A}]_{\mathbf{P_{PU}}}$ is a subsemigroup of the monoid $\mathbf{P_{PU}}$. Hence $\overline{R}$ also is a subsemigroup  of $\mathbf{P_{PU}}$.  Now \eqref{pr3.11_preq2} follows from \eqref{s3_eq2} with
    $$
      S = \overline{R} \quad \textrm{and} \quad M = \mathbf{P_{PU}}.
    $$

    Using \eqref{lem3.8_eq1} with $A=\mathbf{A}$ we see that  $\overline{R}$ is a subset of  the set $\mathbf{A}$,
    $$
      \overline{R} \subseteq \mathbf{A}.
    $$
    In addition, condition \eqref{pr3.11_eq3} and Lemma~\ref{lem3.8} imply
    $$
      \mathbf{A} \subseteq \overline{R}.
    $$
    Thus we have the equality
     \begin{equation}\label{pr3.11_preq3}
       \mathbf{A} = \overline{R}.
    \end{equation}

    Let us prove equality \eqref{pr3.11_eq4}. It is shown above that $\overline{R}$ is a submonoid of $(\mathbf{P_{PU}}, \circ, 1_{\mathbf{P_{PU}}})$.  Hence $\mathbf{A}$ also is a submonoid of $(\mathbf{P_{PU}}, \circ, 1_{\mathbf{P_{PU}}})$ by \eqref{pr3.11_preq3}.  Now using Proposition~\ref{prop3.6} we obtain
    $$
      \mathbf{P_X}  = \mathbf{A},
    $$
    that implies \eqref{pr3.11_eq4} by \eqref{pr3.11_preq2} and \eqref{pr3.11_preq3}.
\end{proof}

We conclude the paper by following conjecture.

\begin{conjecture}\label{con3.14} {\rm (Prove or disprove)}
 Let $\mathbf{A}$ be a subset of the monoid $\mathbf{P_{PU}}= (\mathbf{P_{PU}}, \circ, 1_{\mathbf{P_{PU}}})$, $
        \mathbf{X} := \{ (\RR^+, g \circ d^+) : g \in\mathbf{A} \} $,
    and let $R_{\mathbf{A}}$ be the set of all right ideals  of $[\mathbf{A}]_{\mathbf{P_{PU}}}$. Write
    \begin{equation*}\label{th3.11_eq2}
        \overline{R} := \bigcup\limits_{R \in R_{\mathbf{A}}} {R}.
    \end{equation*}
    If $1_{\mathbf{P_{PU}}}$ belongs to $\mathbf{A}$, then the equality
    \begin{equation*}\label{th3.11_eq4}
        \overline{R}^{1_{\mathbf{P_{PU}}}} = \mathbf{P}_{\mathbf{X}}
    \end{equation*}
    holds.
\end{conjecture}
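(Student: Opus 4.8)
The plan is to \emph{disprove} the conjecture.

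First I would unwind what $\mathbf{P_X}$ is. By Theorem~\ref{mainth} and Proposition~\ref{pr2.4}, $\mathbf{P_{PU}} = \mathbf{End}_{\RR^+}$ consists exactly of the increasing self-maps of $\RR^+$ that fix $0$, and $d^+$ is surjective onto $\RR^+$, so two self-maps of $\RR^+$ agreeing after precomposition with $d^+$ are equal. Hence $(\RR^+, f \circ g \circ d^+) \in \mathbf{X}$ is equivalent to $f \circ g \in \mathbf{A}$, and $f \in \mathbf{P_X}$ iff $f \circ g \in \mathbf{A}$ for every $g \in \mathbf{A}$; taking $g = 1_{\mathbf{P_{PU}}} = \Id_{\RR^+} \in \mathbf{A}$ shows every such $f$ already lies in $\mathbf{A}$, so
\[
\mathbf{P_X} = \{ f \in \mathbf{A} : f \circ \mathbf{A} \subseteq \mathbf{A} \} .
\]
Next I would unwind $\overline{R}$. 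Reading $R_{\mathbf{A}}$, as in Lemma~\ref{lem3.8}, as the set of right ideals of $[\mathbf{A}]_{\mathbf{P_{PU}}}$ that are \emph{contained in} $\mathbf{A}$, I would show that $\overline{R}$ coincides with $I := \{ f \in [\mathbf{A}]_{\mathbf{P_{PU}}} : f \circ [\mathbf{A}]_{\mathbf{P_{PU}}} \subseteq \mathbf{A} \}$ ($\emptyset$ allowed): $I$ is a right ideal of $[\mathbf{A}]_{\mathbf{P_{PU}}}$, it is contained in $\mathbf{A}$ because $\Id_{\RR^+} \in [\mathbf{A}]_{\mathbf{P_{PU}}}$, and it contains every right ideal of $[\mathbf{A}]_{\mathbf{P_{PU}}}$ lying in $\mathbf{A}$, so it is the largest such right ideal, i.e. $\overline{R} = I$. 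In particular $\Id_{\RR^+} \in I$ exactly when $[\mathbf{A}]_{\mathbf{P_{PU}}} = \mathbf{A}$, i.e. when $\mathbf{A}$ is a submonoid, which is precisely the extra hypothesis of Proposition~\ref{prop3.11}. So the conjecture can fail only when $\mathbf{A}$ is not closed under composition, where it would force every non-identity $f \in \mathbf{A}$ with $f \circ \mathbf{A} \subseteq \mathbf{A}$ to satisfy the strictly stronger $f \circ [\mathbf{A}]_{\mathbf{P_{PU}}} \subseteq \mathbf{A}$. I expect this to be false, and would hunt for a counterexample there.

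Concretely, I would take $e = \Id_{\RR^+}$, $g(x) = \min\{x, 1\}$, $h(x) = \min\{x/2, 1\}$ and $\mathbf{A} = \{ e, g, h \}$. These are three distinct increasing maps vanishing at $0$, so $\mathbf{A} \subseteq \mathbf{P_{PU}}$ and $1_{\mathbf{P_{PU}}} = e \in \mathbf{A}$. A short computation gives $g \circ g = g$, $g \circ h = h$, and $(h \circ g)(x) = \min\{x/2, 1/2\}$, which does not belong to $\mathbf{A}$ (its value at $x = 2$ is $1/2$, while $e$, $g$, $h$ take the values $2$, $1$, $1$ there); in particular $\mathbf{A}$ is not a subsemigroup, so Proposition~\ref{prop3.11} does not apply. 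By the formula above, $g \circ \mathbf{A} = \{g, h\} \subseteq \mathbf{A}$ shows $g \in \mathbf{P_X}$. On the other hand $h \circ g \in [\mathbf{A}]_{\mathbf{P_{PU}}}$ while $g \circ (h \circ g) = (g \circ h) \circ g = h \circ g \notin \mathbf{A}$, so $g \notin \overline{R} = I$; since $g \neq e = 1_{\mathbf{P_{PU}}}$, this gives $g \notin \overline{R}^{1_{\mathbf{P_{PU}}}}$ even though $g \in \mathbf{P_X}$. Hence $\overline{R}^{1_{\mathbf{P_{PU}}}} \neq \mathbf{P_X}$, and the conjecture fails.

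The main obstacle is conceptual rather than computational: one must resist patching the proof of Proposition~\ref{prop3.11} and instead recognise the genuine gap between the condition "$f \circ \mathbf{A} \subseteq \mathbf{A}$" governing $\mathbf{P_X}$ and the condition "$f \circ [\mathbf{A}]_{\mathbf{P_{PU}}} \subseteq \mathbf{A}$" governing $\overline{R}$ — a gap that opens precisely when $\mathbf{A}$ fails to be closed under composition — and then engineer a small $\mathbf{A}$ realising it. After that only routine bookkeeping remains: verifying the two identifications above and checking that $e$, $g$, $h$ are pairwise distinct with $h \circ g \notin \mathbf{A}$.
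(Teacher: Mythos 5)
The paper does not prove this statement: Conjecture~\ref{con3.14} is posed as an open ``prove or disprove'' problem, so there is no author's proof to compare against and your argument must be judged on its own. Having checked it, I find your disproof correct. Both identifications are right: since $d^+$ is surjective onto $\RR^+$, membership $f\in\mathbf{P_X}$ reduces to $f\circ g\in\mathbf{A}$ for every $g\in\mathbf{A}$, and with $\Id_{\RR^+}\in\mathbf{A}$ this gives $\mathbf{P_X}=\{f\in\mathbf{A}: f\circ\mathbf{A}\subseteq\mathbf{A}\}$; and, reading $R_{\mathbf{A}}$ as in Lemma~\ref{lem3.8} (the only reading under which the conjecture is not vacuously false), $\overline{R}=\{f\in[\mathbf{A}]_{\mathbf{P_{PU}}}: f\circ[\mathbf{A}]_{\mathbf{P_{PU}}}\subseteq\mathbf{A}\}$ is indeed the largest right ideal of $[\mathbf{A}]_{\mathbf{P_{PU}}}$ contained in $\mathbf{A}$. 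Your example also checks out: $e,g,h$ are increasing, vanish at $0$, and are pairwise distinct (note $g(1)=1\neq 1/2=h(1)$); $g\circ g=g$, $g\circ h=h$, and $(h\circ g)(x)=\min\{x/2,1/2\}$ takes the value $1/2$ at $x=2$ where $e,g,h$ take $2,1,1$, so $h\circ g\notin\mathbf{A}$ and $\mathbf{A}$ is not a subsemigroup. Consequently $g\in\mathbf{P_X}$, while $g\circ(h\circ g)=(g\circ h)\circ g=h\circ g\notin\mathbf{A}$ shows that $g$ lies in no right ideal of $[\mathbf{A}]_{\mathbf{P_{PU}}}$ contained in $\mathbf{A}$; in fact here $\overline{R}=\emptyset$, so $\overline{R}^{1_{\mathbf{P_{PU}}}}=\{e\}$ whereas $\mathbf{P_X}=\{e,g\}$. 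The conjecture is therefore false, and your diagnosis of the mechanism --- the gap between the condition $f\circ\mathbf{A}\subseteq\mathbf{A}$ governing $\mathbf{P_X}$ and the condition $f\circ[\mathbf{A}]_{\mathbf{P_{PU}}}\subseteq\mathbf{A}$ governing $\overline{R}$, which opens precisely when $\mathbf{A}$ is not closed under composition --- correctly explains why Proposition~\ref{prop3.11} does not extend.
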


\section*{Conflict of interest statement}

The research was conducted in the absence of any commercial or financial relationships that could be construed as a potential conflict of interest.

\section*{Funding}

The author was supported by grant 359772 of the Academy of Finland.


\begin{thebibliography}{10}

\bibitem{BS2017}
{\sc Beyrer, J., and Schroeder, V.}
\newblock {Trees and ultrametric m\"{o}bius structures}.
\newblock {\em p-adic Numbers Ultrametr. Anal. Appl. 9}, 4 (2017), 247--256.

\bibitem{BDa2024}
{\sc Bilet, V., and Dovgoshey, O.}
\newblock On monoid of metric preserving functions.
\newblock {\em arXiv:2404.13280v1\/} (2024).

\bibitem{BDS2021}
{\sc Bilet, V., Dovgoshey, O., and Shanin, R.}
\newblock Ultrametric preserving functions and weak similarities of ultrametric
  spaces.
\newblock {\em p-adic Numbers Ultrametr. Anal. Appl. 13}, 3 (2021), 186--203.

\bibitem{DLPS2008TaiA}
{\sc Delhomm\'{e}, C., Laflamme, C., Pouzet, M., and Sauer, N.}
\newblock {Indivisible ultrametric spaces}.
\newblock {\em Topology and its Applications 155}, 14 (2008), 1462--1478.

\bibitem{DovBBMSSS2020}
{\sc Dovgoshey, O.}
\newblock Combinatorial properties of ultrametrics and generalized
  ultrametrics.
\newblock {\em Bull. Belg. Math. Soc. Simon Stevin 27}, 3 (2020), 379--417.

\bibitem{Dov2020MS}
{\sc Dovgoshey, O.}
\newblock On ultrametric-preserving functions.
\newblock {\em Math. Slovaca 70}, 1 (2020), 173--182.

\bibitem{Dov24TA}
{\sc Dovgoshey, O.}
\newblock Strongly ultrametric preserving functions.
\newblock {\em Topology and its Applications 351\/} (2024), 108931.

\bibitem{DD2010}
{\sc Dovgoshey, O., and Dordovskyi, D.}
\newblock {Ultrametricity and metric betweenness in tangent spaces to metric
  spaces}.
\newblock {\em p-adic Numbers Ultrametr. Anal. Appl. 2}, 2 (2010), 100--113.

\bibitem{DM2008}
{\sc Dovgoshey, O., and Martio, O.}
\newblock Blow up of balls and coverings in metric spaces.
\newblock {\em Manuscripta Math. 127\/} (2008), 89--120.

\bibitem{DM2009}
{\sc Dovgoshey, O., and Martio, O.}
\newblock {Products of metric spaces, covering numbers, packing numbers and
  characterizations of ultrametric spaces}.
\newblock {\em Rev. Roumaine Math. Pures. Appl. 54}, 5-6 (2009), 423--439.

\bibitem{DP2013SM}
{\sc Dovgoshey, O., and Petrov, E.}
\newblock Subdominant pseudoultrametric on graphs.
\newblock {\em Sb. Math 204}, 8 (2013), 1131--1151.

\bibitem{GH1961S}
{\sc Gomory, R.~E., and Hu, T.~C.}
\newblock Multi-terminal network flows.
\newblock {\em SIAM 9}, 4 (1961), 551--570.

\bibitem{GroPAMS1956}
{\sc Groot, J.~d.}
\newblock {Non-Archimedean metrics in topology}.
\newblock {\em {Proc. Amer. Math. Soc.} 7}, 5 (1956), 948--953.

\bibitem{Howie2003}
{\sc Howie, J.~M.}
\newblock {\em {Fundamentals of Semigroup Theory}}, vol.~12 of {\em London
  Mathematical Society Monographs}.
\newblock Clarendon Press, Oxford, 2003.

\bibitem{Ibragimov2012}
{\sc Ibragimov, Z.}
\newblock {M\"{o}bius maps between ultrametric spaces are local similarities}.
\newblock {\em Ann. Acad. Sci. Fenn. Math. 37\/} (2012), 309--317.

\bibitem{Ish2021ANUAA}
{\sc Ishiki, Y.}
\newblock An embedding, an extension, and an interpolation of ultrametrics.
\newblock {\em \(p\)-Adic Numbers Ultrametric Anal. Appl. 13}, 2 (2021),
  117--147.

\bibitem{Ish2023ANUAA}
{\sc Ishiki, Y.}
\newblock Constructions of {Urysohn} universal ultrametric spaces.
\newblock {\em \(p\)-Adic Numbers Ultrametric Anal. Appl. 15}, 4 (2023),
  266--283.

\bibitem{Ish2023TA}
{\sc Ishiki, Y.}
\newblock Simultaneous extensions of metrics and ultrametrics of high power.
\newblock {\em Topology Appl. 336\/} (2023), 37.
\newblock Id/No 108624.

\bibitem{Ish2024TA}
{\sc Ishiki, Y.}
\newblock Uniqueness and homogeneity of non-separable {Urysohn} universal
  ultrametric spaces.
\newblock {\em Topology Appl. 342\/} (2024), 11.
\newblock Id/No 108762.

\bibitem{JTRRACEFNSAMR2020}
{\sc Jachymski, J., and Turobo\'{s}, F.}
\newblock On functions preserving regular semimetrics and quasimetrics
  satisfying the relaxed polygonal inequality.
\newblock {\em Rev. R. Acad. Cienc. Exactas F\'{\i}s. Nat., Ser. A Mat., RACSAM
  114}, 3 (2020), 159.

\bibitem{KS2012}
{\sc Kirk, W.~A., and Shahzad, N.}
\newblock {Some fixed point results in ultrametric spaces}.
\newblock {\em Topology Appl. 159\/} (2012), 3327--3334.

\bibitem{Kur1966}
{\sc Kuratowski, K.}
\newblock {\em Topology}, vol.~I. New edition, revised and augmented.
\newblock Academic Press, New York--London, 1966.

\bibitem{Lemin1984FAA}
{\sc Lemin, A.~J.}
\newblock {On isosceles metric spaces}.
\newblock {\em Functional Analysis and its Applications\/} (1984), 26--31.
\newblock (in Russian).

\bibitem{Lemin1984RMS39:5}
{\sc Lemin, A.~J.}
\newblock {On the stability of the property of a space being isosceles}.
\newblock {\em Russ. Math. Surveys 39}, 5 (1984), 283--284.

\bibitem{Lemin1984RMS39:1}
{\sc Lemin, A.~J.}
\newblock {Proximity on isosceles spaces}.
\newblock {\em Russ. Math. Surveys 39}, 1 (1984), 169--170.

\bibitem{Lemin1985SMD32:3}
{\sc Lemin, A.~J.}
\newblock {Isometric embedding of isosceles (non-Archimedean) spaces in
  Euclidean spaces}.
\newblock {\em Soviet Math. Dokl. 32}, 3 (1985), 740--744.

\bibitem{Lemin1988}
{\sc Lemin, A.~J.}
\newblock {An application of the theory of isosceles (ultrametric) spaces to
  the Trnkova-Vinarek theorem}.
\newblock {\em Comment. Math. Univ. Carolinae 29}, 3 (1988), 427--434.

\bibitem{Lemin2003}
{\sc Lemin, A.~J.}
\newblock {The category of ultrametric spaces is isomorphic to the category of
  complete, atomic, tree-like, real graduated lattices \(\mathbf{LAT}^*\)}.
\newblock {\em Algebra Universalis 50}, 1 (2003), 35--49.

\bibitem{PTAbAppAn2014}
{\sc Pongsriiam, P., and Termwuttipong, I.}
\newblock {Remarks on ultrametrics and metric-preserving functions}.
\newblock {\em Abstr. Appl. Anal. 2014\/} (2014), 1--9.

\bibitem{Qiu2009pNUAA}
{\sc Qiu, D.}
\newblock {Geometry of non-Archimedian Gromov--Hausdorff distance}.
\newblock {\em p-adic Numbers Ultrametr. Anal. Appl. 1}, 4 (2009), 317--337.

\bibitem{Qiu2014pNUAA}
{\sc Qiu, D.}
\newblock {The structures of Hausdorff metric in non-Archimedian spaces}.
\newblock {\em p-adic Numbers Ultrametr. Anal. Appl. 6}, 1 (2014), 33--53.

\bibitem{SKPMS2020}
{\sc Samphavat, S., Khemaratchatakumthorn, T., and Pongsriiam, P.}
\newblock Remarks on $b$-metrics, ultrametrics, and metric-preserving
  functions.
\newblock {\em Mathematica Slovaca 70}, 1 (2020), 61--70.

\bibitem{SP2024FPTASE}
{\sc Samphavat, S., and Prinyasart, T.}
\newblock On ultrametrics, $b$-metrics, $w$-distances, metric-preserving
  functions, and fixed point theorems.
\newblock {\em Fixed Point Theory Algorithms Sci. Eng.}, 9 (2024).

\bibitem{VD2021MS}
{\sc Vallin, R.~W., and Dovgoshey, O.~A.}
\newblock P-adic metric preserving functions and their analogues.
\newblock {\em Math. Slovaca 71}, 2 (2021), 391--408.

\bibitem{Vau1999TP}
{\sc Vaughan, J.~E.}
\newblock {Universal ultrametric spaces of smallest weight}.
\newblock {\em Topology Proc. 24\/} (1999), 611--619.

\bibitem{Ves1994UMJ}
{\sc Vestfrid, I.}
\newblock On the universal ultrametric space.
\newblock {\em Ukrainin Math. J. 46}, 12 (1994), 1890--1898.

\end{thebibliography}
\end{document}